\newtheorem{thm}{Theorem}[section]
\newtheorem{lem}[thm]{Lemma}
\newtheorem{cor}[thm]{Corollary}
\newtheorem{claim}[thm]{Claim}
\theoremstyle{definition}
\newtheorem{defin}[thm]{Definition}
\newtheorem{eg}[thm]{Example}
\newtheorem*{conventions}{Conventions}
\newtheorem*{notation}{Notation}
\theoremstyle{remark}
\newtheorem{rem}[thm]{Remark}
\numberwithin{equation}{section}
\newcommand{\bA}{\mathbb{A}}
\newcommand{\fC}{\mathfrak{C}}
\newcommand{\fD}{\mathfrak{D}}
\newcommand{\bF}{\mathbb{F}}
\newcommand{\kc}{\overline{k}}
\newcommand{\sL}{\mathscr{L}}
\newcommand{\sO}{\mathscr{O}}
\newcommand{\bP}{\mathbb{P}}
\newcommand{\bQ}{\mathbb{Q}}
\newcommand{\bZ}{\mathbb{Z}}
\newcommand{\Bs}{\mathrm{Bs}}
\newcommand{\Gal}{\mathrm{Gal}(\overline{k}/k)}
\newcommand{\Pic}{\mathrm{Pic}}
\newcommand{\Spec}{\mathrm{Spec}}
\begin{document}

\title{Notes on cylinders in smooth projective surfaces}


\author{}
\address{}
\curraddr{}
\email{}
\thanks{}

\author{Masatomo Sawahara}
\address{Graduate School of Science and Engineering, Saitama University, Shimo-Okubo 255, Sakura-ku Saitama-shi,  Saitama 338-8570, JAPAN}
\curraddr{}
\email{sawahara.masatomo@gmail.com}
\thanks{}

\subjclass[2020]{14E05, 14E30, 14J26, 14R25. }

\keywords{geometrically rational surface, cylinder, perfect field, elementary link. }

\date{}

\dedicatory{}

\begin{abstract}
In this article, we determine the existing condition of cylinders in smooth minimal geometrically rational surfaces over a perfect field. 
Furthermore, we show that for any birational map between smooth projective surfaces, one contains a cylinder if and only if so does the other. 
\end{abstract}

\maketitle
\setcounter{tocdepth}{1}

Throughout this article, let $k$ be a perfect field of an arbitrary characteristic and let $\kc$ be an algebraic closure of $k$. 

\section{Introduction}\label{1}
An open subset $U$ of an algebraic variety $X$ over $k$ is called an {\it $\bA ^r_k$-cylinder} if $U$ is isomorphic to $\bA ^r_k \times Z$ for some variety $Z$ over $k$. 
Certainly, cylinders are geometrically simple objects, however, they receive a lot of attention from the viewpoint of unipotent group actions on affine cones over polarized varieties (e.g., {\cite{KPZ11,Per13,CPW16a}}). 
Recently, the importance of finding ``relative'' cylinders with respect to Mori fiber spaces is particularly recognized ({\cite[\S 3.3]{CPPZ21}}), where a relative cylinder means a {\it vertical cylinder} defined by: 
\begin{defin}[{\cite{DK18}}]\label{vertical:def}
Let $f:X \to Y$ be a dominant projective morphism of relative dimension $s \ge 1$ defined over a field $K$, and let $r$ be an integer with $1 \le r \le s$. 
For an integer $r$ with $1 \le r \le s$, an $\bA ^r_K$-cylinder $U \simeq \bA ^r_K \times Z$ in $X$ is called a {\it vertical $\bA ^r_K$-cylinder} with respect to $f$ if there exists a morphism $g: Z \to Y$ (of relative dimension $s-r$) such that the restriction of $f$ to $U$ coincides with $g \circ pr _Z$. 
\end{defin}
Moreover, we note the following fact: 
\begin{lem}[{\cite[Lemma 3]{DK18}}]\label{vertical}
Let the notation be the same as in Definition \ref{vertical:def}. 
Then $f$ admits a vertical $\bA ^r_K$-cylinder if and only if the generic fiber $X_{\eta}$, which is defined over the function field $K(Y) = K(\eta )$ of the base variety, contains an $\bA ^r_{K(Y)}$-cylinder. 
\end{lem}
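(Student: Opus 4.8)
The plan is to prove both implications by passing between $U$ and its generic fiber via base change to the generic point $\eta$ of $Y$. The key observation is that for an open subset $U \subseteq X$, the scheme-theoretic generic fiber of the restriction $f|_U : U \to Y$ is the open subscheme $U_\eta = U \times_X X_\eta$ of $X_\eta$; moreover, if $U$ meets the generic fiber — which it does as soon as $U$ is nonempty and $f$ is dominant, since then $f(U)$ is a dense constructible subset of $Y$ hence contains $\eta$ — then $U_\eta$ is a nonempty open subscheme of $X_\eta$.

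For the forward direction, suppose $f$ admits a vertical $\bA^r_K$-cylinder $U \simeq \bA^r_K \times Z$ with $g : Z \to Y$ such that $f|_U = g \circ pr_Z$. Base-changing the whole picture along $\Spec K(Y) \to Y$, I would show that $U_\eta \simeq \bA^r_{K(Y)} \times Z_\eta$, where $Z_\eta = Z \times_Y \Spec K(Y)$ is the generic fiber of $g$; this uses that $\bA^r_K \times_K K(Y) = \bA^r_{K(Y)}$ and that fiber products commute appropriately, together with the compatibility $f|_U = g \circ pr_Z$ which ensures the base change of the projection is again a projection. Since $Z_\eta$ is nonempty (as $g$ is dominant, being the composite factor of the dominant $f|_U$ up to the $\bA^r$ factor), $U_\eta$ is a genuine $\bA^r_{K(Y)}$-cylinder inside $X_\eta$.

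For the converse, suppose $X_\eta$ contains an $\bA^r_{K(Y)}$-cylinder $V \simeq \bA^r_{K(Y)} \times W$. The idea is to spread out: $V$, $W$, and the isomorphism are all defined by finitely much data over $K(Y) = \varinjlim_{\emptyset \neq Y' \subseteq Y} \sO_Y(Y')$, so there is a nonempty open $Y_0 \subseteq Y$ over which everything descends, giving an open $U_0 \subseteq f^{-1}(Y_0) \subseteq X$ with $U_0 \simeq \bA^r_{Y_0} \times_{Y_0} \mathcal{W}$ for some $Y_0$-scheme $\mathcal{W}$ whose generic fiber is $W$. Setting $Z = \mathcal{W}$ and letting $g : Z \to Y_0 \hookrightarrow Y$ be the structure morphism, the projection $U_0 \to Z$ then satisfies $f|_{U_0} = g \circ pr_Z$ by construction, so $U_0$ is a vertical $\bA^r_K$-cylinder with respect to $f$. (Here I am using that standard spreading-out/limit arguments — EGA IV, \S 8 — allow one to descend schemes of finite presentation, morphisms, and isomorphisms from the generic point to an open neighborhood.)

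The main obstacle is bookkeeping rather than conceptual: in the forward direction one must check carefully that base change turns the factorization $f|_U = g \circ pr_Z$ into the corresponding factorization over $K(Y)$ and that the product structure is preserved (this is where one uses that the $\bA^r$ factor is \emph{constant}, i.e.\ $\bA^r_K = \bA^r_{\Spec K} \times_{\Spec K} (\,\cdot\,)$, so its base change to $K(Y)$ is $\bA^r_{K(Y)}$); in the converse direction one must invoke the spreading-out machinery with the right finiteness hypotheses (everything in sight is of finite type over $k$, hence of finite presentation since $k$ is a field, so the hypotheses of the relevant approximation results are met) and make sure the spread-out projection is compatible with $f$. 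Neither step is deep, but both require writing down the fiber-product diagrams precisely.
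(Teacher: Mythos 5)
Your argument is correct and is essentially the only argument available: the paper gives no proof of this lemma at all (it is quoted verbatim from \cite[Lemma 3]{DK18}), and the proof there is precisely your two-step restriction-to-the-generic-fiber and spreading-out (EGA IV, \S 8) argument, with the same bookkeeping points you already flag (nonemptiness of $Z_\eta$ via dominance, constancy of the $\bA^r$-factor under base change, and shrinking $Y_0$ to descend the isomorphism compatibly with $f$).
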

Lemma \ref{vertical} implies that looking for vertical cylinders with respect to a Mori fiber space can be attributed to the determination of the existence of cylinders in the generic fiber, which is a Fano variety of Picard rank one, of this. 
Here, letting $f:X \to Y$ be a dominant morphism between normal algebraic varieties defined over a field $K$ satisfying $\dim Y >0$, we note that the function field $K(Y)$, which is the base field of the generic fiber of $f$, is not algebraically closed even if $K$ is algebraically closed. 
Hence, we need to treat algebraic varieties defined over an algebraically non-closed field. 

Now, we shall focus on the case of a Mori fiber space of relative dimension two, more precisely a so-called del Pezzo fibration. 
In fact, Dubouloz and Kishimoto completely provided the existence condition of cylinders in smooth del Pezzo surfaces of Picard rank one (see Theorem \ref{DK} (1)). 
Hence, we can determine whether a del Pezzo fibration (with only $\bQ$-factorial terminal singularities) admits a vertical cylinder or not. 
In this article, we generalize their consideration to the case of smooth projective surfaces instead of smooth del Pezzo surfaces of Picard rank one. 
In other words, our target is thus the existing condition of cylinders in smooth projective surfaces defined over an algebraically non-closed field. 

We shall first consider the case of minimal del Pezzo surfaces. 
Note that the Picard rank of a smooth minimal del Pezzo surface is either one or two (see Lemma \ref{minimal}). 
As mentioned above, the case of smooth del Pezzo surfaces of Picard rank one was studied by Dubouloz and Kishimoto. 
Moreover, the author recently gives the existing condition of cylinders in smooth minimal del Pezzo surfaces of Picard rank two. 
These results are summarized as the following theorem: 
\begin{thm}[{\cite[Theorem 1]{DK18}}, {\cite[Theorem 1.7]{Saw1}}]\label{DK}
Let $V$ be a smooth minimal del Pezzo surface defined over a perfect field $k$. 
Then the following assertions hold: 
\begin{enumerate}
\item Assume that $V$ is of Picard rank one. 
Then $V$ contains an $\bA ^1_k$-cylinder if and only if $(-K_V)^2 \ge 5$ and $V(k) \not= \emptyset$. 
\item Assume that $V$ is of Picard rank two. 
Then $V$ contains an $\bA ^1_k$-cylinder if and only if $(-K_V)^2 =8$ and $V$ is endowed with a structure of Mori conic bundle admitting a section defined over $k$. 
\end{enumerate}
\end{thm}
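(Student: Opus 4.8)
The plan is to prove the two assertions separately but along parallel lines, recalling assertion (1) from \cite{DK18} and assertion (2) from \cite{Saw1}: in each case one splits the argument into a construction half (the ``if'' direction) and an obstruction half (the ``only if'' direction), and organises the work by the value of $d := (-K_V)^2$. By Lemma \ref{minimal} only $\rho(V) \in \{1, 2\}$ can occur, and a first step is to record which degrees are admissible once $V$ is minimal with $\rho(V)$ fixed: an inspection of the Galois module $\Pic(V \otimes \kc)$ and of its $(-1)$-curves shows $d \in \{1,2,3,4,5,6,8,9\}$ when $\rho(V) = 1$ and $d \in \{1,2,3,4,5,6,8\}$ when $\rho(V) = 2$, degree $7$ never being minimal over $k$ because the line joining the two blown-up points is always Galois-stable. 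Thus the substance of the theorem concerns $d \ge 5$.

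For the ``if'' direction I would argue case by case on $d$. When $\rho(V) = 1$ and $d = 9$, the hypothesis $V(k) \neq \emptyset$ turns the Severi--Brauer surface $V$ into $\bP^2_k$, which contains $\bA^2_k$. When $d = 8$, a $k$-point $p$ lies on a $k$-rational divisor $\ell_1 + \ell_2$ cut out on $V$ by the tangent plane at $p$ in its anticanonical quadric model in $\bP^3_k$, and $V \setminus (\ell_1 \cup \ell_2)$ is a form of $\bA^2_k$, hence is $\bA^2_k$ itself since over a perfect field forms of the affine plane are trivial. For $d = 6$ and $d = 5$ --- the case $d = 5$ requiring no point hypothesis, since quintic del Pezzo surfaces always have a $k$-point by a classical theorem --- I would exhibit over $\kc$ a Galois-stable pencil of rational curves on $V$ whose general member becomes an affine line after deleting a suitable Galois-stable boundary curve $D$, so that $V \setminus D$ is the total space of an $\bA^1$-bundle over an affine rational $k$-curve $Z$ and is therefore the product $\bA^1_k \times Z$. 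When $\rho(V) = 2$ and $d = 8$, the Mori conic bundle $\pi : V \to B$ together with a $k$-section $s_0$ presents $V$ as $\bP_B(\mathcal E)$ for a rank-two vector bundle $\mathcal E$ on the conic $B$; over a suitable affine open subset $Z$ of $B$, both $\mathcal E$ and the $\bA^1$-bundle $V \setminus s_0(B)$ become trivial, and the latter restricts to $\bA^1_k \times Z$ --- which in particular covers the twisted possibility $V \cong C \times C$ with the diagonal in the role of $s_0$.

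For the ``only if'' direction, suppose $U \simeq \bA^1_k \times Z$ is a cylinder in $V$. When $\rho(V) = 1$, surjectivity of $\Pic(V) \to \Pic(U)$ forces $Z$ to be affine, so that $D := V \setminus U$ is a nonempty curve all of whose components are ample. Next, compactify $Z$ to its smooth projective model $\widehat Z$ and resolve the rational map from $V$ to $\widehat Z$ induced by the projection $U \to Z$ (its base points lie on $D$); this produces a birational morphism $\sigma : \widetilde V \to V$ that is an isomorphism over $U$, together with a $\bP^1$-fibration $\pi : \widetilde V \to \widehat Z$ whose boundary $\sigma^{-1}(D)$ is the union of one section of $\pi$, the fibres over the nonempty finite set $\widehat Z \setminus Z$, and the $\sigma$-exceptional curves. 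Writing a general fibre as $F = \sigma^*(aA) - \sum_i m_i E_i$ with $A$ the ample generator of $\Pic(V)$, the relations $F^2 = 0$ and $K_{\widetilde V} \cdot F = -2$, the identity $\rho(\widetilde V) = \rho(V) + \#\{E_i\}$, and the bound on the number of degenerate fibres coming from $\rho(\widetilde V / \widehat Z)$, together constrain the numerical invariants; combining these with an analysis of the dual graph of the (necessarily ample-componented, when $\rho(V)=1$) boundary $D$ forces $d$ to be large, namely $d \ge 5$ when $\rho(V) = 1$ and $d = 8$ when $\rho(V) = 2$. The remaining conclusions are then read off near the boundary: when $\rho(V) = 1$, a degenerate fibre of $\pi$ --- equivalently, a point at which the distinguished section meets a special fibre --- yields a $k$-rational point of $V$; when $\rho(V) = 2$ and $d = 8$, one checks, using the arithmetic of conics to determine which pencils of $(1,1)$-curves descend to $k$, that $\pi$ pushes down to a Mori conic bundle structure on $V$ carrying the image of the distinguished section.

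I expect the ``only if'' direction to be the main obstacle, and within it the non-existence statements: that there is no cylinder when $\rho(V) = 1$ and $d \le 4$, and none when $\rho(V) = 2$ unless $d = 8$ and $V$ is a sectioned conic bundle. The difficulty is that the numerical analysis of $\pi : \widetilde V \to \widehat Z$ must be carried out while keeping track of Galois orbits of the $\sigma$-exceptional curves and of the degenerate fibres, and that passing from the numerical constraints to the sharp geometric conclusion --- a $k$-point, respectively a sectioned conic bundle --- requires a genuine case-by-case study of how the ample boundary $D$ can sit inside $V$. By comparison, the ``if'' direction is routine once the correct boundary curve, equivalently the correct $\bA^1$-fibration, has been identified in each degree.
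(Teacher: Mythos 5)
The paper does not actually prove Theorem \ref{DK}: it is quoted from {\cite[Theorem 1]{DK18}} and {\cite[Theorem 1.7]{Saw1}}, the only added content being the remark that those characteristic-zero arguments go through verbatim in arbitrary characteristic. So your proposal can only be compared with the cited proofs and with the closely parallel argument the paper does carry out in Section \ref{4}. Your ``if'' half is essentially the standard one and is fine in outline: $\bP ^2_k$ for $d=9$ via Lemma \ref{BS}, the complement of the tangent-plane section of the quadric for $d=8$ together with triviality of $k$-forms of $\bA ^2$ over a perfect field, explicit pencils for $d=5,6$, and, in rank two, the complement of a $k$-section of the $\bP ^1$-bundle, where the descent of the $\bA ^1$-bundle structure is exactly the Kambayashi--Miyanishi step used in the proof of Lemma \ref{lem(4-1)}.

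The genuine gap is in the ``only if'' half, which you yourself identify as the main obstacle: the assertion that ``the numerical constraints force $d\ge 5$, resp.\ $d=8$'' is exactly where all the work lies, and your sketch does not supply the tool that makes it work. In {\cite{DK18}}, {\cite{Saw1}} (and in Section \ref{4} here) the non-existence statements are not obtained from a numerical analysis of the resolved $\bP ^1$-fibration and the dual graph of the boundary alone. The key input is singularity-theoretic: the pencil $\sL$ of closures of the $\bA ^1$-fibres has at most one base point, necessarily $k$-rational (cf.\ Claim \ref{claim}); if $\Bs (\sL )=\emptyset$ one gets a Mori conic bundle with a $k$-section, which via Lemma \ref{MCB} already yields the degree restriction and the section/point conclusions; if there is a base point, one shows the corresponding log pair is not log canonical there and invokes Corti's inequality ({\cite[Theorem 3.1]{Cor00}}, cf.\ Lemma \ref{Corti}) to contradict the value of $(\sL )^2$ when $d\le 4$. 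Nothing in your outline replaces this step. Two smaller corrections: by Iskovskikh's classification (Lemma \ref{degree}, refined in the proof of Lemma \ref{lem(3-4)}), a $k$-minimal rank-two geometrically rational surface has $(-K_V)^2=8$ or $(-K_V)^2\in \{1,2,4\}$ when positive, so the degrees $3,5,6$ you allow in rank two do not occur; and in rank one the $k$-rational point is read off directly from the unique base point of $\sL$ (or from the contradiction with $\rho _k(V)=1$ when $\sL$ is base point free), not from a degenerate fibre of the resolved fibration, an implication which as stated you would still have to justify.
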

\begin{rem}
In Theorem \ref{DK} (1), it is also known that $V$ contains an $\bA ^1_k$-cylinder if and only if $V$ is rational over $k$ (see {\cite[Theorem 3.30]{CPPZ21}}). 
However, note that a smooth minimal del Pezzo surface of Picard rank two containing an $\bA ^1_k$-cylinder is not always rational over $k$ by Theorem \ref{DK} (2). 
\end{rem}
{\cite{DK18}} and {\cite{Saw1}} only deal with the case of characteristic zero, but note that Theorem \ref{DK} holds for an arbitrary characteristic. 
Indeed, their arguments work verbatim even if ${\rm char}(k)>0$. 

Next, we shall consider the case of minimal projective surfaces but not del Pezzo surfaces. 
It is enough to treat only the case of minimal geometrically rational because the other cases are not difficult (see Remark \ref{rem}). 
{\cite{Saw1}} also gives the existence condition of cylinders in minimal weak del Pezzo surfaces with its anti-canonical divisor not ample. 
By generalizing this work to minimal geometrically rational projective surfaces over perfect fields, we have the first main result as follows: 
\begin{thm}\label{main(1)}
Let $V$ be a smooth minimal geometrically rational projective surface, whose $-K_V$ is not ample, defined over a perfect field $k$. 
Then the following assertions hold: 
\begin{enumerate}
\item $V$ contains an $\bA ^1_k$-cylinder if and only if $(-K_V)^2=8$. 
\item $V$ contains the affine plane $\bA ^2_k$ if and only if $(-K_V)^2=8$ and $V(k) \not= \emptyset$. 
\end{enumerate}
\end{thm}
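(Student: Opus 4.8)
The plan is to reduce, via the classification of minimal geometrically rational surfaces, to the case of a Mori conic bundle, to split the analysis according to whether $K_V^2 = 8$ or $K_V^2 \le 7$, and to deduce the affine-plane statement from the cylinder statement.

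\emph{Reduction.} A smooth $k$-minimal geometrically rational surface is, by the classification, either of Picard rank one — in which case $-K_V$ is ample, so $V$ is a del Pezzo surface — or of Picard rank two and endowed with a Mori conic bundle structure $\pi \colon V \to B$ over a conic $B$ over $k$. Since $-K_V$ is not ample I would fix such a $\pi$ and write $K_V^2 = 8 - m$, where $m \ge 0$ is the number of degenerate geometric fibres. If $m = 0$ then $V_{\kc} \cong \bF_e$, and non-ampleness of $-K_{V_{\kc}}$ forces $e \ge 2$; if $m \ge 1$ then $K_V^2 \le 7$. Thus $K_V^2 = 8$ is exactly the case $m = 0$.

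\emph{Case $K_V^2 = 8$.} Here $V_{\kc} \cong \bF_e$ with $e \ge 2$, so the negative section is the unique irreducible curve of negative self-intersection of $V_{\kc}$; being Galois-stable and geometrically irreducible it descends to a curve $E \subset V$ over $k$, which is a section of $\pi$. In particular $B \cong E$, so that $V(k) \ne \emptyset$ if and only if $B(k) \ne \emptyset$. The complement $V \setminus E$ is an $\bA^1$-bundle over $B$; restricting it over $B \setminus P$ for a closed point $P \subset B$ of minimal degree (degree $1$ when $B(k) \ne \emptyset$, degree $2$ otherwise), the vanishing $H^1(B \setminus P, \sO) = 0$ kills the additive part of the structure group while $\Pic(B \setminus P) = 0$ trivialises the linear part, so $V \setminus (E \cup \pi^{-1}(P)) \cong \bA^1_k \times (B \setminus P)$ is an $\bA^1_k$-cylinder; and when $V(k) \ne \emptyset$ one may take $\deg P = 1$, so $B \setminus P \cong \bA^1_k$ and this cylinder is $\bA^2_k$. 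This gives the ``if'' part of (1) and of (2), while the ``only if'' part of (2) follows from that of (1), since $\bA^2_k$ is itself an $\bA^1_k$-cylinder with a $k$-point.

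\emph{Case $K_V^2 \le 7$.} It remains to prove that if $m \ge 1$ then $V$ has no $\bA^1_k$-cylinder; this is the heart of the argument. Suppose it does; taking the cylinder to be $U \cong \bA^1_k \times Z$ with $Z$ affine, the complement $\Delta := V \setminus U$ is a reduced curve. The second projection extends to a rational map onto the smooth projective model $\bar Z$ of $Z$ (a conic, as $V$ is geometrically rational), and resolving its indeterminacy by blow-ups $\sigma \colon \tilde V \to V$ with centres on $\Delta$ produces a $\bP^1$-fibration $\psi \colon \tilde V \to \bar Z$ with generic fibre $\bP^1_{k(\bar Z)}$, whose boundary divisor $\tilde V \setminus \sigma^{-1}(U)$ — arranged to be SNC — has a single $\psi$-horizontal component, a section, all other components being $\psi$-vertical. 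I would then analyse the sequence of elementary links joining the Mori conic bundle $(V, \pi)$ to the relatively minimal model of $(\tilde V, \psi)$, keeping track of the $(-1)$-curves, and use the $k$-minimality of $V$ — no Galois orbit of pairwise disjoint $(-1)$-curves contained in degenerate $\pi$-fibres can be contracted over $k$ — to force this chain to perform exactly such a contraction unless $m = 0$, a contradiction. When $-K_V$ happens to be nef (so $V$ is a minimal weak del Pezzo surface with $-K_V$ not ample) this is already contained in \cite{Saw1}; the genuinely new case is when $-K_V$ is not even nef, and there I would instead use that $k$-minimality singles out a Galois orbit of curves $C$ with $C^2 \le -3$ and analyse how these must meet the boundary of any hypothetical cylinder.

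\emph{Main obstacle.} The difficult step is the last one. Over $\kc$ the surface $V_{\kc}$ is rational and hence does contain $\bA^1$-cylinders, so the argument must genuinely use the $k$-minimality — equivalently the Galois action on the components of the degenerate $\pi$-fibres — and, since a cylinder in $V$ need not be vertical with respect to $\pi$, controlling its interaction with the conic bundle seems to require the full resolution-and-elementary-link bookkeeping rather than a short intersection-theoretic estimate.
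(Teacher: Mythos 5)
Your reduction and your treatment of the case $(-K_V)^2=8$ are essentially sound and match the paper's argument (descent of the negative section of $\bF_e$, removal of it, and triviality of the resulting $\bA^1$-fibration over an affine open of the base via Kambayashi--Miyanishi; the deduction of (2) from (1) is also fine). However, for the case $(-K_V)^2\neq 8$ --- which you yourself identify as ``the heart of the argument'' --- you do not give a proof: you only outline a plan (``analyse the sequence of elementary links\dots keeping track of the $(-1)$-curves\dots force this chain to perform exactly such a contraction''), and in the genuinely new situation where $-K_V$ is not even nef you replace it by an undeveloped suggestion about Galois orbits of curves with $C^2\le -3$. Nothing in the proposal explains why the elementary-link bookkeeping terminates in the desired contradiction, so the ``only if'' half of assertion (1) is left unproved. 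This is a genuine gap, not a matter of missing routine details.

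It is also worth pointing out that the step you dismiss as apparently insufficient --- ``a short intersection-theoretic estimate'' --- is exactly how the paper closes this case, and it avoids the link bookkeeping entirely. First, minimality plus Iskovskikh's classification gives $(-K_V)^2\le 4$ whenever $(-K_V)^2\neq 8$ (Lemma \ref{degree}); your bound $(-K_V)^2\le 7$ is too weak for what follows. Second, if $V$ contained a cylinder, the pencil $\sL$ of fibre closures cannot be base point free, since otherwise it would define a Mori conic bundle with a section over $k$, contradicting Lemma \ref{MCB} (2) for a minimal surface with $(-K_V)^2<8$; hence $\Bs(\sL)$ is a single $k$-rational point $p$ (Claim \ref{claim}). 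Third, writing $\sL\sim_{\bQ}a(-K_V)+bF$ with $F$ the fibre through $p$, the paper's Lemma \ref{Corti} shows $a>0$ and $b<0$: if $b\ge 0$ the pair $\bigl(V,\tfrac1a\sL-\tfrac{b}{a}F\bigr)$ fails to be log canonical at $p$, and Corti's inequality forces the local intersection number $i(L_1,L_2;p)=(\sL)^2=a^2(-K_V)^2+4ab$ to exceed $4a(a+b)$, impossible precisely because $(-K_V)^2\le 4$. Finally, $b<0$ gives an immediate contradiction: if $(-K_V)^2\le 0$ then $(\sL)^2=a^2(-K_V)^2+4ab<0$, and if $0<(-K_V)^2\le 4$ then non-ampleness of $-K_V$ provides a curve $C$ with $(-K_V\cdot C)\le 0$, whence $0\le(\sL\cdot C)\le b(F\cdot C)$ forces $b\ge 0$ (Lemma \ref{lem(4-3)}). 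If you want to salvage your write-up, this multiplicity/log-canonicity argument at the unique base point is the missing ingredient; the Galois action enters only through the $k$-rationality of $p$ and the minimality used in Lemma \ref{MCB} (2) and Lemma \ref{degree}, not through any tracking of links or of negative curves.
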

\begin{rem}\label{rem}
Let $V$ be a smooth minimal projective surface over $k$. 
We note $\kappa (V_{\kc}) = -\infty$ provided that $V$ contains an $\bA ^1_k$-cylinder. 
Indeed, $V_{\kc}$ is then birational to a smooth projective surface with a $\bP ^1$-fibration structure. 
Now, we further assume that $V_{\kc}$ is ruled but irrational. 
Notice that $V$ is endowed with only one structure of Mori conic bundle $\pi :V \to B$ over $k$. 
Then any $\bA ^1_k$-cylinder on $V$ is a vertical $\bA ^1_k$-cylinder with respect to $\pi$ (cf.\ Lemma \ref{lem(5-2)}). 
Thus, $V$ contains an $\bA ^1_k$-cylinder if and only if $\pi$ admits a section defined over $k$ by Lemma \ref{vertical}. 
\end{rem}
From now on, we shall consider the case of not necessarily minimal projective surfaces. 
Then we mainly focus on the classification of elementary links between smooth geometrically rational projective surfaces over perfect fields (see {\cite{Isk96}}, for details). 
As a result, we obtain the second main result as follows: 
\begin{thm}\label{main(2)}
Let $\sigma :S \to V$ be a birational morphism between smooth projective surfaces over a perfect field $k$ such that $V$ is $k$-minimal. 
Then $S$ contains an $\bA ^1_k$-cylinder if and only if so does $V$. 
\end{thm}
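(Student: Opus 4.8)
The plan is to induct on the number of blow-ups in $\sigma\colon S \to V$, reducing the general case to the single elementary step. Since $V$ is $k$-minimal and geometrically rational (the other cases being handled by Remark \ref{rem}), factor $\sigma$ through a smooth projective surface $S'$ with $\sigma = \sigma' \circ \rho$, where $\rho\colon S \to S'$ is a single blow-down of a $k$-orbit of $(-1)$-curves and $\sigma'\colon S' \to V$ is again a birational morphism onto a $k$-minimal surface. By the inductive hypothesis it suffices to prove the statement when $\sigma = \rho$ is a single blow-down; equivalently, I want: for a smooth projective surface $S$ and a birational morphism $\rho\colon S \to S'$ contracting one $\mathrm{Gal}$-orbit of disjoint $(-1)$-curves, $S$ contains an $\bA^1_k$-cylinder iff $S'$ does. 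One direction is almost immediate: if $S'$ contains a cylinder $U \simeq \bA^1_k \times Z$, then since $\rho$ is an isomorphism over the complement of finitely many closed points, $\rho^{-1}(U)$ still contains an $\bA^1_k$-cylinder — one only needs to shrink $Z$ to avoid the images of the finitely many contracted points, which is harmless because a nonempty open subset of $\bA^1_k \times Z$ of the form $\bA^1_k \times Z_0$ is still a cylinder. So the content is the reverse implication: a cylinder on $S$ must descend to a cylinder on $S'$.

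For the hard direction I would first run the $k$-MMR on $S$ to obtain a birational morphism $\tau\colon S \to V'$ onto a $k$-minimal surface $V'$, and observe that $V'$ is again geometrically rational with $\kappa(S_{\kc}) = -\infty$ by the remark, so $V'$ falls under Theorem \ref{DK} or Theorem \ref{main(1)}. The strategy is then to transfer the cylinder along the elementary links relating the various $k$-minimal models. Concretely: if $S$ contains an $\bA^1_k$-cylinder, then so does its $k$-minimal model $V'$ (this is the "descent" statement, and it is exactly what makes the theorem bidirectional — the existence of a cylinder is a birational invariant among such surfaces). Granting that, $V'$ and $V$ are both $k$-minimal geometrically rational surfaces; by the Iskovskikh classification of elementary links, $V$ and $V'$ are connected by a chain of elementary links, and I would check link-by-link that the property "contains an $\bA^1_k$-cylinder" is preserved. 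Combining $\text{cyl}(S) \Rightarrow \text{cyl}(V') \Leftrightarrow \text{cyl}(V)$ with the easy forward implication $\text{cyl}(V) \Rightarrow \text{cyl}(S)$ gives the theorem.

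The genuinely delicate point — and where I expect to spend most of the effort — is the claim that a cylinder on $S$ descends to a cylinder on its $k$-minimal model, i.e.\ that the $(-1)$-curves (and $\mathrm{Gal}$-orbits thereof) being contracted do not destroy the cylinder structure. The issue is that the boundary $S \setminus U$ of the cylinder need not contain the exceptional locus, so after contracting we may lose the product structure. The right tool is the classification via Mori conic bundle structures and the fact, used already in Remark \ref{rem}, that on a surface with $\kappa(V_{\kc}) = -\infty$ every $\bA^1_k$-cylinder is, after possibly moving it, vertical with respect to a $\bP^1$-fibration (Lemma \ref{lem(5-2)}); then $\rho$ can be arranged to be compatible with (a modification of) that fibration, and verticality is preserved under contracting a $(-1)$-curve in a fiber together with the induced birational map of bases. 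A careful case analysis will be needed according to whether the contracted orbit lies in fibers of a conic bundle or meets a section, and according to the value of $(-K)^2$ in the relevant $k$-minimal models — in particular one must confront the subtlety, flagged in the first remark of the excerpt, that a Picard-rank-two del Pezzo surface of degree $8$ with a cylinder need not be $k$-rational, so "descent of a cylinder" cannot be proved merely by invoking rationality. I would organize this around Lemma \ref{vertical} and an analysis of how sections of Mori conic bundles behave under the elementary links, which is precisely the bookkeeping that the classification of links over a perfect field makes tractable.
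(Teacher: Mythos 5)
There is a genuine gap: your argument defers precisely the point at issue. After the (harmless) reduction to a single blow-down and the easy pull-back direction, your ``hard direction'' rests on the claim that a cylinder on $S$ descends to a cylinder on a $k$-minimal model $V'$ of $S$ --- but that claim \emph{is} the ``only if'' part of Theorem \ref{main(2)} (with $V'$ in place of $V$), so granting it and then transferring along elementary links proves nothing new; and the sketch you give for it does not work as stated. You invoke the idea that every $\bA ^1_k$-cylinder on a surface with $\kappa = -\infty$ is (after ``moving it'') vertical with respect to a $\bP ^1$-fibration, citing the mechanism of Lemma \ref{lem(5-2)}; but that mechanism is special to the non-geometrically-rational case, where the unique Mori conic bundle structure and L\"uroth's theorem force verticality. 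On a geometrically rational $S$ the fibration attached to a cylinder is in general only a pencil $\sL$ with a ($k$-rational) base point, hence a rational map rather than a morphism on $S$, and the contraction $\sigma$ is \emph{given} --- its exceptional curves can be horizontal for, or even members of, that pencil --- so it cannot be ``arranged to be compatible'' with the fibration, and ``verticality is preserved under contracting a $(-1)$-curve in a fiber'' simply does not apply. The promised case analysis is exactly the difficulty, and it is left unperformed.

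The paper's route is genuinely different and avoids descending the cylinder at all. From the cylinder on $S$ it builds (Lemma \ref{lem(5-1)}) a birational map $S \dashrightarrow W$ where $W$ carries a $\bP ^1$-bundle structure with a section defined over $k$ (resolve the single base point of $\sL$, note the last exceptional curve is a section, contract Galois-orbits in singular fibers, and apply Lemma \ref{MCB} (2)); in particular $W \in \fC$ with $(-K_W)^2 = 8$. If $S$ is not geometrically rational, the section is pushed to a section of the unique conic bundle on $V$ (L\"uroth plus $q(V_{\kc})>0$), producing a \emph{new} vertical cylinder on $V$. If $S$ is geometrically rational, one argues by contradiction: were $V$ cylinder-free, Theorems \ref{DK} and \ref{main(1)} would place $V$ in one of three classes each of which is stable under birational maps by Iskovskikh's classification of elementary links (Lemma \ref{lem(3-4)}), contradicting $V \dashrightarrow W$. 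If you want to salvage your plan, you would in effect have to reprove this link-theoretic stability and the Lemma \ref{lem(5-1)} construction; the direct geometric ``descent through one contraction'' is the step that fails.
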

By Theorems \ref{DK}, \ref{main(1)} and \ref{main(2)}, letting $\sigma :S \to V$ be a birational morphism between smooth geometrically rational projective surfaces defined over $k$, we see that $S$ contains an $\bA ^1_k$-cylinder if and only if $V$ satisfies one of the following: 
\begin{itemize}
\item $V$ is a del Pezzo surface of Picard rank one with $(-K_V)^2 \ge 5$ and $V(k) \not= \emptyset$; 
\item $V$ is a $k$-form of the Hirzebruch surface $\bF _m$ of degree $m \ge 2$; 
\item $V$ is a $k$-form of $\bP ^1_{\kc} \times \bP ^1_{\kc}$ and is endowed with a structure of Mori conic bundle admitting a section defined over $k$. 
\end{itemize}

On the other hand, Theorem \ref{main(2)} immediately provides the following corollary: 
\begin{cor}\label{cor}
Let $\chi :S_1 \dashrightarrow S_2$ be a birational map between smooth projective surfaces over a perfect field $k$. 
Then $S_1$ contains an $\bA ^1_k$-cylinder if and only if so does $S_2$. 
\end{cor}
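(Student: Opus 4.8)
The plan is to deduce Corollary \ref{cor} directly from Theorem \ref{main(2)} by passing to a common resolution of the birational map. First I would take $\chi : S_1 \dashrightarrow S_2$ and resolve its indeterminacy: there exists a smooth projective surface $T$ over $k$ together with birational morphisms $\tau _i : T \to S_i$ ($i=1,2$) such that $\chi \circ \tau _1 = \tau _2$; this is the standard elimination of indeterminacy for birational maps of smooth projective surfaces, which is valid over any perfect field (one may even take $T$ to dominate both $S_i$ by a sequence of blow-ups of closed points). It therefore suffices to prove that each $S_i$ contains an $\bA ^1_k$-cylinder if and only if $T$ does, i.e.\ that the property of containing an $\bA ^1_k$-cylinder is invariant under a single birational morphism between smooth projective surfaces.

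Next I would reduce this to Theorem \ref{main(2)} by running a $k$-minimal model program. Given a smooth projective surface $S$ over $k$, choose a birational morphism $\rho : S \to V$ onto a $k$-minimal smooth projective surface $V$ (such a $V$ exists by the classical theory of minimal models of surfaces over a perfect field). Theorem \ref{main(2)}, applied to $\rho$, tells us that $S$ contains an $\bA ^1_k$-cylinder if and only if $V$ does. Now apply this to $T$: fix a birational morphism $T \to V$ with $V$ $k$-minimal; then $T$ contains an $\bA ^1_k$-cylinder iff $V$ does. For $S_1$, the composite $T \to S_1 \to ?$ need not be a morphism onto a minimal surface, so instead I would argue as follows: the birational morphism $\tau _1 : T \to S_1$ can be factored through a $k$-minimal model of $S_1$. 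More precisely, pick a birational morphism $S_1 \to V_1$ with $V_1$ $k$-minimal; then $T \to S_1 \to V_1$ exhibits $V_1$ as a $k$-minimal model dominated by $T$ as well. Applying Theorem \ref{main(2)} to $S_1 \to V_1$ and to $T \to V_1$ gives: $S_1$ contains an $\bA ^1_k$-cylinder $\iff$ $V_1$ does $\iff$ $T$ does. The same argument with $S_2$ and a $k$-minimal model $V_2$ gives: $S_2$ contains an $\bA ^1_k$-cylinder $\iff$ $T$ does. Chaining the two equivalences yields the corollary.

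One technical point to be careful about is that $T \to V_1$ is genuinely a morphism (not merely a birational map): this holds because every birational morphism of smooth projective surfaces factors as a composition of blow-downs of $(-1)$-curves, and once $S_1 \to V_1$ contracts $S_1$ to a $k$-minimal surface, the composite $T \to S_1 \to V_1$ is again a birational morphism between smooth projective surfaces, so Theorem \ref{main(2)} applies to it verbatim. Another point is that a $k$-minimal model $V_1$ of $S_1$ exists and is reached by a birational morphism from $S_1$ rather than by a more general birational map; this is a standard fact over perfect fields (a smooth projective surface is either $k$-minimal or admits a $(-1)$-curve or a Galois-stable set of disjoint $(-1)$-curves that can be contracted over $k$, and iterating terminates). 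I expect the main (and only genuine) obstacle to be purely expository: making sure the chain of applications of Theorem \ref{main(2)} is set up through honest birational morphisms to $k$-minimal surfaces, after which the statement is a formal consequence. No new geometric input beyond Theorem \ref{main(2)} and the existence of resolutions/minimal models over perfect fields is needed.
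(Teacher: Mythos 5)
Your argument is correct and is precisely the routine deduction the paper has in mind when it asserts that Theorem \ref{main(2)} ``immediately provides'' Corollary \ref{cor}: resolve $\chi$ by a common smooth model $T$ over $k$, pass to $k$-minimal models $V_i$ of $S_i$, and apply Theorem \ref{main(2)} to the birational morphisms $S_i \to V_i$ and $T \to V_i$. The paper leaves these standard ingredients (Galois-equivariant elimination of indeterminacy and the existence of $k$-minimal models reached by birational morphisms over a perfect field) implicit, so your write-up just makes the same argument explicit.
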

The article is organized as follows. 
In Section \ref{2}, we review some facts on smooth minimal geometrically ruled projective surfaces over an algebraically non-closed field and the Mori conic bundles from these surfaces. 
In section \ref{3}, we recall that birational morphism between smooth minimal geometrically rational surfaces over a perfect field (of an arbitrary characteristic), more precisely the classification of elementary links between these surfaces. 
The results in this section will play important role in Section \ref{5}. 
In section \ref{4}, we prove Theorem \ref{main(1)}. 
The idea of this proof is mainly based on the author's previous work {\cite{Saw1}}. 
In section \ref{5}, we prove Theorem \ref{main(2)}. 
Then we easily see that it is enough to show the ``only if'' part of Theorem \ref{main(2)}. 
In other words, for any birational morphism $\sigma :S \to V$ between smooth projective surfaces over $k$, whose $V$ is $k$-minimal, we will show that $V$ contains an $\bA ^1_k$-cylinder provided that $S$ contains an $\bA ^1_k$-cylinder. 
In the last section \ref{6}, we present some remarks on Corollary \ref{cor}. 
\begin{conventions}
Let $X$ be an algebraic variety defined over $k$. 
Then $X_{\kc}$ denotes the base extension of $X$ to the algebraic closure $\kc$, i.e., $X_{\kc} := X \times _{\Spec (k)} \Spec (\kc )$. 
$X$ is {\it geometrically rational} (resp. {\it geometrically ruled}) if $X_{\kc}$ is rational (resp. ruled). 
Let $\pi :X \to Y$ be a surjective morphism between smooth projective varieties defined over $k$. 
Then we say that $\pi$ is a {\it $\bP ^1$-fibration} (resp. {\it a $\bP ^1$-bundle}) if a general fiber (resp. any fiber) of the base extension $\pi_{\kc}: X_{\kc} \to Y_{\kc}$ is isomorphic to $\bP ^1_{\kc}$. 
Furthermore, we say that $\pi$ is a {\it Mori conic bundle} if any fiber of the base extension $\pi_{\kc}: X_{\kc} \to Y_{\kc}$ is isomorphic to the plane conic (not necessarily irreducible). 
A normal projective surface $S$ is a {\it del Pezzo surface} if the anti-canonical divisor $-K_S$ is ample. 
A smooth projective surface $V$ defined over $k$ is a {\it $k$-minimal} if any birational morphism $\mu :V \to V'$ from $V$ to a smooth projective surface $V'$ defined over $k$ is an isomorphism. 
\end{conventions}
\begin{notation}
We will use the following notations: 
\begin{itemize}
\item $\kappa (V)$: the Kodaira dimension of a smooth projective surface $V$ over an algebraically closed field. 
\item $q(V)$: the irregularity number of a smooth projective surface $V$ over an algebraically closed field. 
\item $\rho _k(X)$: the Picard number of a variety $X$ defined over $k$. 
\item $\Pic (X)$: the Picard group of a variety $X$. 
\item $\Pic (X)_{\bQ} := \Pic (X) \otimes _{\bZ} \bQ$. 
\item $\varphi ^{\ast}(D)$: the total transform of a divisor $D$ by a morphism $\varphi$. 
\item $\psi _{\ast}(D)$: the direct image of a divisor $D$ by a morphism $\psi$. 
\item $(D \cdot D')$: the intersection number of two divisors $D$ and $D'$. 
\item $(D)^2$: the self-intersection number of a divisor $D$. 
\item $\bF _m$: the Hirzebruch surface of degree $m$, i.e., $\bF _m = \bP (\sO _{\bP ^1} \oplus \sO _{\bP ^1}(m))$. 
\end{itemize}
\end{notation}
\section{Preliminaries}\label{2}
In this section, we summarize some basic but important facts for this article. 
\begin{lem}\label{BS}
Let $X$ be a smooth algebraic variety over $k$ satisfying $X_{\kc} \simeq \bP ^n_{\kc}$. 
Then $X \simeq \bP ^n_k$ if and only if $X(k) \not= \emptyset$. 
\end{lem}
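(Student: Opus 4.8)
The ``only if'' direction is immediate, since $\bP^n_k$ carries the $k$-rational point $[1:0:\cdots:0]$. So the content is the converse: assuming $X(k) \not= \emptyset$, we must exhibit an isomorphism $X \simeq \bP^n_k$ defined over $k$. Observe first that $X_{\kc} \simeq \bP^n_{\kc}$ is projective, so $X$ itself is a smooth projective variety (properness descends along the faithfully flat extension $k \to \kc$), and $X$ is geometrically connected.

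The plan is to descend the hyperplane class. Since $\Pic(X_{\kc}) \simeq \Pic(\bP^n_{\kc}) = \bZ \cdot H$ with $H$ the class of $\sO_{\bP^n}(1)$, and since every element of $G := \Gal$ acts on $X_{\kc}$ preserving the unique ample generator of $\Pic(X_{\kc})$, the natural $G$-action on $\Pic(X_{\kc})$ is trivial; in particular $H \in \Pic(X_{\kc})^G$. I would then invoke the low-degree exact sequence attached to the Hochschild--Serre spectral sequence (equivalently, Galois descent for the Picard functor),
$$0 \longrightarrow \Pic(X) \longrightarrow \Pic(X_{\kc})^{G} \longrightarrow \mathrm{Br}(k) \longrightarrow \mathrm{Br}(X),$$
whose last arrow is pullback along the structure morphism $X \to \Spec(k)$. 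A point $x \in X(k)$ gives a section $\Spec(k) \to X$, so $x^{\ast} \colon \mathrm{Br}(X) \to \mathrm{Br}(k)$ splits $\mathrm{Br}(k) \to \mathrm{Br}(X)$; hence that map is injective and $\Pic(X) \to \Pic(X_{\kc})^{G}$ is surjective. Thus $H$ lifts to a class $[L] \in \Pic(X)$.

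With $L$ in hand, I consider the complete linear system $|L|$. By flat base change $\dim_k H^0(X,L) = \dim_{\kc} H^0(X_{\kc}, \sO_{\bP^n}(1)) = n+1$, and $\Bs|L|$ is a closed subscheme of $X$ defined over $k$ whose base extension is $\Bs|\sO_{\bP^n}(1)| = \emptyset$; hence $\Bs|L| = \emptyset$ and $|L|$ defines a $k$-morphism $\Phi \colon X \to \bP(H^0(X,L)^{\vee}) \simeq \bP^n_k$. After base extension, $\Phi_{\kc}$ is the morphism of $\bP^n_{\kc}$ associated to the complete system of $\sO_{\bP^n}(1)$, i.e.\ an isomorphism; since being an isomorphism may be checked after the faithfully flat base change $k \to \kc$, the morphism $\Phi$ is itself an isomorphism, and $X \simeq \bP^n_k$.

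The main obstacle is the descent step, namely producing $L$ over $k$: this is precisely where $X(k) \not= \emptyset$ is used, the point being that in its absence $X$ is a nontrivial Severi--Brauer variety and $H$ need not descend. I should note that the whole assertion is classical, being Ch\^atelet's theorem on Severi--Brauer varieties, so one may alternatively just cite it; the sketch above is the standard proof specialized to the case at hand.
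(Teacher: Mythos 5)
Your argument is correct, but it is genuinely more detailed than what the paper does: the paper's entire proof is a citation of Ch\^atelet's theorem on Severi--Brauer varieties ({\cite[Proposition 4.5.10]{Poo17}}), exactly the classical statement you mention at the end as an alternative. What you supply instead is the standard self-contained descent argument: the Galois action fixes the ample generator of $\Pic(X_{\kc}) \simeq \bZ$, the Hochschild--Serre sequence $0 \to \Pic(X) \to \Pic(X_{\kc})^{G} \to \mathrm{Br}(k) \to \mathrm{Br}(X)$ (valid here since $X$ is proper and geometrically integral, so Hilbert 90 applies), the splitting of $\mathrm{Br}(k) \to \mathrm{Br}(X)$ by the section coming from a $k$-point, and then the complete linear system of the descended line bundle $L$, with freeness of $|L|$ and the isomorphism property of $\Phi$ checked after the faithfully flat base change $k \to \kc$. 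All of these steps are sound, and the only place the hypothesis $X(k)\not=\emptyset$ enters is indeed the splitting of the Brauer map, which you correctly flag as the crux. What the paper's route buys is brevity (one reference); what yours buys is transparency about where the rational point is used and why the lemma fails for nontrivial Severi--Brauer varieties. Either is acceptable; if you write out the full argument, just make sure to state the hypotheses under which the exact sequence holds (properness and geometric integrality, giving $H^0(X_{\kc},\sO^*)=\kc^*$), as you implicitly do.
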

\begin{proof}
See, e.g., {\cite[Proposition 4.5.10]{Poo17}}. 
\end{proof}
In this article, we will deal with smooth minimal geometrically ruled surfaces over an algebraically non-closed field. 
We then note the following fact: 
\begin{lem}\label{minimal}
Let $V$ be a smooth minimal geometrically ruled surface over $k$. 
Then either $V$ is a del Pezzo surface of Picard rank one or $V$ is of Picard rank two and is endowed with a structure of Mori conic bundle defined over $k$. 
\end{lem}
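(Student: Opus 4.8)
The plan is to run a minimal model program over $k$ and use the classification of minimal geometrically ruled (equivalently, geometrically rational with a $\mathbb{P}^1$-fibration over a curve, but here we are told $V$ is already minimal) surfaces. Since $V$ is $k$-minimal and $V_{\kc}$ is ruled, standard theory (Iskovskikh, Manin) tells us $V$ carries either a structure of Mori fiber space over $\Spec k$ (so $\rho_k(V)=1$) or a Mori conic bundle $\pi\colon V\to B$ over a smooth curve $B$ with $\rho_k(V)=2$; these are the only two outputs of the MMP on a $k$-minimal surface. First I would record this dichotomy and dispose of the ``miracle'' case $\rho_k(V)=1$: a $k$-minimal surface with Picard number one is automatically a del Pezzo surface, because $-K_V$ must be (a rational multiple of) the ample generator of $\Pic(V)_{\bQ}$, and the sign is fixed by $\kappa(V_{\kc})=-\infty$, which forces $-K_V$ to be ample rather than its negative. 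This gives the first alternative in the statement.

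In the remaining case $\rho_k(V)=2$ with a Mori conic bundle $\pi\colon V\to B$, the base $B$ is a smooth projective curve with $B_{\kc}\simeq\mathbb{P}^1_{\kc}$; indeed $V_{\kc}$ ruled and the generic fiber of $\pi$ a conic of Picard rank one over $k(B)$ force $B$ to be geometrically rational, and $\rho_k(V)=2$ with $\rho_k(B)=1$ via the fibration. So we are exactly in the ``Mori conic bundle defined over $k$, Picard rank two'' alternative, and nothing further needs to be proved in that case — the statement only asserts the existence of such a structure, not its ampleness or anything finer.

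The main point to pin down carefully is therefore the trichotomy ``$\rho_k(V)=1$ del Pezzo vs.\ $\rho_k(V)=2$ conic bundle,'' i.e.\ that the MMP over a perfect field applied to a $k$-minimal geometrically ruled surface terminates in precisely one of these two shapes and that no intermediate Picard numbers occur. I would invoke the structure theory of minimal rational surfaces over non-closed fields (the Enriques--Manin--Iskovskikh classification), noting that $V_{\kc}$ ruled with $\kappa(V_{\kc})=-\infty$ rules out abelian-fibration or general-type outputs, so the only terminal objects are del Pezzo surfaces of Picard rank one and conic bundles of Picard rank two. The one genuine subtlety — the \emph{main obstacle} — is the claim that in the Picard-rank-one case $-K_V$ is actually \emph{ample} and not merely $\mathbb{Q}$-Cartier of the wrong sign: this is where one uses that $V$ is smooth and $V_{\kc}$ rational, so $-K_V$ cannot be trivial or anti-ample (e.g.\ because $V_{\kc}\simeq\mathbb{P}^2_{\kc}$ or a del Pezzo by the geometric classification), whence the ample generator of $\Pic(V)_{\bQ}$ is a positive multiple of $-K_V$. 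Once that sign is fixed, the rest is bookkeeping on Picard numbers and fibration structures.
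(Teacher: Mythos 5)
Your proposal is correct and follows essentially the same route as the paper, which simply observes that $K_V$ is not nef because $V$ is geometrically ruled and then cites the structure theorem for $k$-minimal surfaces (Poonen, Theorem 9.3.20; equivalently the Iskovskikh--Manin/Mori dichotomy you invoke), with your sign argument in the Picard-rank-one case being exactly what that theorem packages. One inessential slip: when $V_{\kc}$ is an irrational ruled surface the base $B$ of the Mori conic bundle is \emph{not} geometrically rational (e.g.\ $V=\bP^1_k\times E$), but the lemma does not claim this, so your proof of the stated dichotomy is unaffected.
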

\begin{proof}
Note that $K_V$ is not nef since $V$ is geometrically ruled. 
Hence, we obtain the assertion by {\cite[Theorem 9.3.20]{Poo17}}. 
\end{proof} 
Hence, we shall mainly consider the properties of Mori conic bundles from smooth minimal geometrically ruled surfaces. 
Note that {\cite[Exercise 3.13]{KSC04}} summarizes the basic properties of Mori conic bundles from a smooth projective surface. 
In this article, we will particularly use the following properties: 
\begin{lem}\label{MCB}
Let $V$ be a smooth projective surface over $k$ with a Mori conic bundle structure $\pi :V \to B$. Then: 
\begin{enumerate}
\item The number of singular fibers of $\pi _{\kc}$ equals $8(1-q(V_{\kc}))-(-K_V)^2$. 
In particular, $\pi$ is a $\bP ^1$-bundle if and only if $(-K_V)^2=8(1-q(V_{\kc}))$. 
\item Assume further that $V$ is minimal. Then $\pi$ admits a section defined over $k$ only if $\pi$ is a $\bP ^1$-bundle, in other words, $(-K_V)^2=8(1-q(V_{\kc}))$. 
\end{enumerate}
\end{lem}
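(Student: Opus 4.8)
The plan is to work over $\kc$ throughout, since both claimed identities are stated in terms of invariants of $V_{\kc}$, and then lift the section statement in part (2) back down to $k$ via minimality. For part (1), the key computation is the standard Noether-type formula for a conic bundle surface. Pass to $\pi_{\kc}: V_{\kc} \to B_{\kc}$, where $B_{\kc} \simeq \bP^1_{\kc}$ since $V_{\kc}$ is geometrically ruled with $B$ smooth projective of genus $q(V_{\kc})$ (a conic bundle over a rational curve forces $B_{\kc}$ rational; more precisely $q(V_{\kc}) = g(B_{\kc})$). First I would take a minimal model $p: V_{\kc} \to \bF_n$ of the conic bundle, so that $V_{\kc}$ is obtained from a $\bP^1$-bundle over $B_{\kc}$ by blowing up one point in each singular fiber; if there are $s$ singular fibers, then $(-K_{V_{\kc}})^2 = (-K_{\bF_n})^2 - s = 8 - s$ when $B_{\kc} \simeq \bP^1_{\kc}$. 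More generally, a $\bP^1$-bundle over a curve of genus $g$ has $K^2 = 8(1-g)$, so blowing up $s$ points gives $(-K_{V_{\kc}})^2 = 8(1-q(V_{\kc})) - s$, which rearranges to $s = 8(1-q(V_{\kc})) - (-K_V)^2$. The "in particular" clause is then immediate: $\pi$ is a $\bP^1$-bundle (no singular fibers) iff $s = 0$ iff $(-K_V)^2 = 8(1-q(V_{\kc}))$.

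For part (2), assume $V$ is $k$-minimal and that $\pi$ admits a section $C \subset V$ defined over $k$. The claim is that $\pi$ must then already be a $\bP^1$-bundle, equivalently that $\pi_{\kc}$ has no singular fibers. Suppose for contradiction that $\pi_{\kc}$ has a singular fiber. Each singular fiber of $\pi_{\kc}$ is a union of two $(-1)$-curves $E_1 \cup E_2$ meeting transversally at one point, and the section $C_{\kc}$ meets the fiber in one point, hence meets exactly one of $E_1, E_2$ (and is disjoint from the other, or meets it at the node — but a section meets each fiber once, so it meets the fiber in a single smooth point of the reducible fiber, lying on exactly one component). The Galois group $\Gal$ permutes the singular fibers of $\pi_{\kc}$ and, within each fiber, permutes the two components; the union of all components of singular fibers \emph{not} met by $C_{\kc}$ is a Galois-stable, pairwise disjoint collection of $(-1)$-curves (disjointness because distinct singular fibers are disjoint, and within one fiber only one component is selected). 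Contracting this collection gives a birational morphism $V \to V'$ defined over $k$ onto a smooth projective surface, contradicting $k$-minimality of $V$ — unless the collection is empty. Hence $\pi_{\kc}$ has no singular fibers, so it is a $\bP^1$-bundle and $(-K_V)^2 = 8(1-q(V_{\kc}))$ by part (1).

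The main obstacle is the descent/Galois bookkeeping in part (2): one must argue carefully that "the components not met by the section" really do form a Galois-stable set of disjoint $(-1)$-curves, so that their contraction is defined over $k$. The subtlety is that the section $C$ is defined over $k$, so $C_{\kc}$ is Galois-stable, and therefore the partition of the components of the singular fibers into "met by $C_{\kc}$" versus "not met by $C_{\kc}$" is Galois-equivariant; combined with the fact that the two components of a single singular fiber are the only ones meeting each other, the unmet components are pairwise disjoint, and the standard criterion for contractibility over $k$ (a Galois-stable disjoint union of $(-1)$-curves contracts over $k$) applies. I would also double-check the genus/irregularity identification $q(V_{\kc}) = g(B_{\kc})$ and the edge case where the section might pass through a node; but a genuine section is a morphism $B \to V$ splitting $\pi$, so its image is smooth and meets each fiber at a smooth point, ruling this out.
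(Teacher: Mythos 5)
Your proposal is correct, and it is essentially the argument the paper is implicitly relying on: the paper's own ``proof'' is only a citation to Exercise 3.13 of Koll\'ar--Smith--Corti and Lemma 2.4 of the author's earlier paper, and your two steps (the Noether-type count $K_{V_{\kc}}^2=8(1-g(B_{\kc}))-s$ obtained by contracting one component in each singular fiber down to a $\bP^1$-bundle, plus the Galois-equivariant contraction over $k$ of the components of singular fibers missed by the section, contradicting $k$-minimality) are exactly the standard content behind those citations. The only blemish is your opening assertion that $B_{\kc}\simeq\bP^1_{\kc}$, which is false when $V_{\kc}$ is irrational ruled (a case the lemma must cover, cf.\ Remark 1.5 of the paper), but you immediately correct this by carrying out the computation with $g(B_{\kc})=q(V_{\kc})$, so no actual gap remains.
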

\begin{proof}
By using {\cite[Exercise 3.13]{KSC04}}, we obtain these assertions (see also {\cite[Lemma 2.4]{Saw1}}). 
\end{proof}
On the other hand, we prepare the following lemma: 
\begin{lem}\label{degree}
Let $V$ be a smooth minimal geometrically rational projective surface of Picard rank two over $k$. 
If $(-K_V)^2 \not= 8$, then $(-K_V)^2 \le 4$.  
\end{lem}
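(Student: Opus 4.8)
The plan is to exploit the structure of a smooth minimal geometrically rational projective surface $V$ of Picard rank two, namely that by Lemma \ref{minimal} (applied after noting geometrically rational implies geometrically ruled) such a $V$ is equipped with a Mori conic bundle $\pi : V \to B$ over $k$. Since $V$ is geometrically rational we have $q(V_{\kc}) = 0$, so Lemma \ref{MCB} (1) gives that the number of singular fibers of $\pi_{\kc}$ equals $8 - (-K_V)^2 \ge 0$; this already yields $(-K_V)^2 \le 8$. The remaining task is to exclude the cases $(-K_V)^2 \in \{5,6,7\}$, i.e.\ to show that $\pi_{\kc}$ cannot have exactly $1$, $2$, or $3$ singular fibers when $V$ is $k$-minimal of Picard rank two.

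The key point will be a Galois-descent / counting argument on the components of the singular fibers. Over $\kc$, each singular fiber of $\pi_{\kc}$ is a union of two $(-1)$-curves meeting transversally at one point. The absolute Galois group $\mathrm{Gal}$ permutes the singular fibers of $\pi_{\kc}$ and permutes the set of $(-1)$-curves occurring as components of these fibers. If $\pi_{\kc}$ had an odd number of singular fibers, or more precisely if one could contract a Galois-stable collection of $(-1)$-curves — one from each fiber in a Galois orbit of singular fibers — then one would obtain a birational morphism $V \to V'$ over $k$ onto a smooth projective surface, contradicting $k$-minimality. So the heart of the matter is: for $1$, $2$, or $3$ singular fibers, show there is always a Galois-stable way to pick one component per fiber to contract. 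First I would treat one singular fiber ($(-K_V)^2 = 7$): a unique singular fiber is Galois-stable, and one of its two components must then be Galois-stable (either Galois fixes each, or swaps them but then their intersection point is a $k$-point and $\ldots$ — in any case a descent obstruction argument produces a contractible $k$-structure), contradicting minimality. For two or three singular fibers the Galois group permutes the $2$ (resp.\ $3$) fibers; in each orbit one can choose components compatibly, again using that within a single fiber the two components are either both stabilized or swapped by the stabilizer subgroup.

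The main obstacle I anticipate is the bookkeeping of the Galois action when the singular fibers form a single orbit of size $2$ or $3$ and the stabilizer of a fiber swaps its two components: one has to verify that a consistent Galois-equivariant choice of "which component to contract" still exists, i.e.\ that there is no genuine cohomological obstruction of the type that does occur for, say, del Pezzo surfaces of degree $\le 4$ of Picard rank one. I expect this to work out because the relevant set (one $(-1)$-curve per singular fiber) carries a transitive-on-fibers action whose per-fiber ambiguity is only a $\bZ/2$, and contracting the whole Galois orbit at once is automatically defined over $k$; the resulting surface $V'$ is smooth with a Mori conic bundle over $B$ having strictly fewer singular fibers, so $\rho_k(V') \ge 2$ would be needed for a contradiction with minimality, and this forces the orbit structure to be compatible. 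Concretely, I would phrase the argument as: if $(-K_V)^2 > 4$ and $(-K_V)^2 \ne 8$, then $1 \le 8 - (-K_V)^2 \le 3$, and in each of these three cases the union of all $(-1)$-curves that are components of singular fibers of $\pi_{\kc}$ contains a nonempty Galois-stable sub-collection meeting each fiber in exactly one component, whose contraction is a birational morphism defined over $k$ to a smooth surface — contradicting $k$-minimality. Hence $(-K_V)^2 \le 4$.
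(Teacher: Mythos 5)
Your reduction to excluding $(-K_V)^2\in\{5,6,7\}$ is fine (Lemma \ref{minimal} gives the Mori conic bundle, $q(V_{\kc})=0$, and Lemma \ref{MCB}~(1) gives $8-(-K_V)^2\ge 0$), but the core step --- that with $1$, $2$ or $3$ singular fibers there always exists a $\Gal$-stable sub-collection consisting of exactly one component of each singular fiber --- is both unproved and, as stated, false. Already in the one-fiber case your argument trails off at the crucial possibility that $\Gal$ swaps the two components; the fact that their intersection point is $k$-rational produces nothing contractible by itself. The genuine obstruction to your scheme is that the purely combinatorial data (Galois permuting $n$ fibers, with a possible swap inside each fiber) looks exactly the same for $n\le 3$ as for $n\ge 4$, where $k$-minimal conic bundles (e.g.\ of degree $\le 4$) really do exist; so no bookkeeping of the action on fiber components alone can succeed, and some geometry specific to low degree must enter. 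Concretely, take $V_{\kc}$ a del Pezzo surface of degree $6$ with the conic bundle of class $H-E_1$: the four fiber components $E_2,\,H-E_1-E_2,\,E_3,\,H-E_1-E_3$ can form a single $\Gal$-orbit (Galois image the Klein four-group of the hexagon preserving the fiber class), in which case there is \emph{no} Galois-stable choice of one component per fiber; the surface is nevertheless non-minimal, but only because the two disjoint $(-1)$-\emph{sections} $E_1$ and $H-E_2-E_3$ form a Galois-stable contractible pair. Similarly, the correct exclusion of degree $7$ uses the uniqueness of the negative section (it is Galois-stable and meets the two fiber components with different multiplicities, so they cannot be swapped, or it is itself a Galois-stable $(-1)$-curve), not a choice inside the fiber.

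So the missing idea is precisely the analysis of the sections and of the full configuration of negative curves on conic bundles with few singular fibers; this is the content of Iskovskikh's classification, and indeed the paper does not argue directly but deduces the lemma from \cite[Theorem 5]{Isk80} together with \cite[Appendix A]{Saw1}. If you want a self-contained proof, you would need, for each of the degrees $5$, $6$, $7$, either to exhibit a Galois-stable set of pairwise disjoint $(-1)$-curves among \emph{all} negative curves (components of fibers \emph{and} sections, treating separately the case where $V_{\kc}$ is a weak del Pezzo and the case where it carries a section of self-intersection $\le -2$), or to rule out the offending Galois actions; your current argument does neither.
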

\begin{proof}
This assertion follows from {\cite[Theorems 5]{Isk80}} and {\cite[Appendix A]{Saw1}}. 
\end{proof}
Finally, we present the following result, which generalizes {\cite[Lemma 4.8]{Saw1}}: 
\begin{lem}\label{Corti}
Let $V$ be a smooth geometrically rational projective surface with $\rho _k(V) =2$ and $(-K_V)^2 \le 4$ over $k$, and admits a Mori conic bundle $\pi :V \to \bP ^1_k$ over $k$. 
Let $\sL$ be a linear pencil on $V$ such that $\Bs (\sL )$ consists of exactly one $k$-rational point $p$. 
Assume that a general member $L$ of $\sL$ satisfies $L \backslash \{ p\} \simeq \bA ^1_k$ and is $\bQ$-linearly equivalent to $a(-K_V) +  bF$ for some $a,b \in \bQ$, where $F$ is the closed fiber of $\pi :V \to \bP ^1_k$ passing through $p$. 
Then $a>0$ and $b<0$. 
\end{lem}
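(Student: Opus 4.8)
The plan is to work on the surface $V$ and analyze the intersection-theoretic constraints forced by the hypotheses on $\sL$. Since $\rho_k(V) = 2$ and $\pi : V \to \bP^1_k$ is a Mori conic bundle, the Picard group $\Pic(V)_{\bQ}$ is spanned by $-K_V$ and $F$, so writing $L \sim_{\bQ} a(-K_V) + bF$ is no loss of generality; the content is in pinning down the signs of $a$ and $b$. First I would record the basic intersection numbers: $(-K_V)^2 = d \le 4$, $(-K_V \cdot F) = 2$ (since $F$ is a conic and $\pi$ is a Mori conic bundle), and $F^2 = 0$. From these I get $(L \cdot F) = 2a$ and $(L \cdot (-K_V)) = ad + 2b$, as well as $L^2 = a^2 d + 4ab$.

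The key input is that a general member $L$ of $\sL$ is an irreducible curve with $L \setminus \{p\} \simeq \bA^1_k$ and $\Bs(\sL) = \{p\}$ a single $k$-rational point. Geometrically over $\kc$, $L_{\kc}$ is then a rational curve, and two general members of $\sL_{\kc}$ meet only at the point above $p$, so $L^2 = (L \cdot L') = \mathrm{mult}_p$-type quantity, which is a positive integer; in particular $L^2 > 0$. Positivity of $(L \cdot F)$: since $\sL$ is a genuine pencil (not composed with $\pi$, as otherwise $\Bs(\sL)$ would not be a single point), a general $L$ is not a fiber component and $F$ is a closed fiber through $p$, so $(L \cdot F) > 0$, forcing $a > 0$. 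That handles the first half. For $b < 0$, the natural route is the adjunction/genus computation: $L_{\kc}$ is a rational curve with exactly one place over $p$ (a unibranch singularity of some $\delta$-invariant $\delta \ge 0$ since $L \setminus \{p\}$ is $\bA^1$, hence smooth and rational), so $p_a(L) = \delta$, giving $2\delta - 2 = L^2 + (K_V \cdot L) = a^2 d + 4ab - ad - 2b = (a^2 - a)d + (4a - 2)b$. Combined with $L^2 = a^2 d + 4ab > 0$ one should be able to squeeze out $b < 0$; the precise bookkeeping will use $a > 0$ together with $d \le 4$, and possibly a case split according to whether $a \ge 1$ or $0 < a < 1$.

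The step I expect to be the main obstacle is controlling $b$ in the regime of small $a$ (when $0 < a < 1$, which can genuinely occur since $a \in \bQ$), where the crude inequality $L^2 > 0$ is too weak on its own. Here I would need the sharper fact that $L$ passes through $p$ with multiplicity exactly $L^2$ there (two general members meeting only at $p$), and more importantly a bound relating this multiplicity to the anticanonical degree — essentially: since $-K_V$ is $\pi$-ample and $V$ is a minimal conic bundle with $d \le 4$, there are enough singular fibers that any section-like or low-degree pencil is forced to be "fiber-heavy," i.e. $b$ must be strictly negative. This is exactly where Lemma \ref{Corti} generalizes \cite[Lemma 4.8]{Saw1}, so I would follow that source's argument: intersect $L$ with a singular fiber $F_0$ of $\pi_{\kc}$ (a union of two $(-1)$-curves $E_1 + E_2$), note $(L \cdot F_0) = 2a$ is distributed as $(L \cdot E_1) + (L \cdot E_2)$ with each summand a nonnegative integer, and use the existence of at least one such singular fiber (guaranteed by $d \le 4 < 8 = 8(1-q)$ via Lemma \ref{MCB}(1), since $V$ is geometrically rational so $q(V_{\kc}) = 0$) to derive the contradiction with $b \ge 0$. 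Assembling these pieces — positivity of $L^2$, the genus formula, and the singular-fiber intersection estimate — yields $a > 0$ and $b < 0$.
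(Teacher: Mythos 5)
Your first step is fine: $a>0$ follows, as in the paper, from $(\sL\cdot F)=2a\ge 0$ together with $(\sL)^2=a^2(-K_V)^2+4ab>0$ (two general members meet at $p$). The gap is in the second half: you never actually derive $b<0$, and the tools you name cannot do it. The adjunction/multiplicity bookkeeping gives, for the multiplicities $m_1\ge m_2\ge\dots$ of a general member at $p$ and its infinitely near base points, $\sum m_i^2=(\sL)^2=a^2d+4ab$ and $\sum m_i=ad+2b-2$ (where $d=(-K_V)^2$), together with $m_1\le(\sL\cdot F)=2a$ when $F$ is smooth at $p$. These relations are \emph{consistent} with $b\ge 0$: for instance $d=4$, $a=1$, $b=0$ corresponds numerically to a pencil of unibranch anticanonical curves with one ordinary cusp at $p$ ($m_1=2$, $\delta=1$), and no amount of squeezing these inequalities excludes it. (They do close the special case where $p$ is the node of a singular fiber, since there $m_1\le a$ forces $2ab\le-2a$, but not the main, smooth-fiber case.) Your fallback — intersecting $L$ with a singular fiber $F_0=E_1+E_2$ of $\pi_{\kc}$ and writing $2a=(L\cdot E_1)+(L\cdot E_2)$ as a sum of nonnegative integers — only says that $2a$ is a nonnegative integer and carries no information about $b$ at all, so ``the existence of singular fibers'' cannot produce the contradiction; deferring to the argument of the cited Lemma 4.8 of \cite{Saw1} is not a proof, and that lemma is precisely what is being generalized here.

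The missing idea is a log-canonicity argument combined with Corti's inequality, which is how the paper proceeds. Assuming $b\ge 0$, one resolves the base point by the shortest sequence of blow-ups $\psi:\bar V\to V$ making $\bar\sL$ base-point free; the last exceptional curve $\bar E_n$ is a section of the induced fibration, and computing $(\bar\sL\cdot K_{\bar V})=-2$, $(\bar\sL)^2=0$ against the ramification formula for $K_V-\frac{b}{a}F+\frac{1}{a}\sL\sim_{\bQ}0$ shows the discrepancy of $\bar E_n$ satisfies $c_n\le-2$, i.e.\ the pair $\bigl(V,\,-\frac{b}{a}F+\frac{1}{a}\sL\bigr)$ is not log canonical at $p$. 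Corti's theorem ({\cite[Theorem 3.1]{Cor00}}) then gives the strict bound $i(L_1,L_2;p)>4a(a+b)$ when $F$ is smooth (and $>4a(a+2b)$ when $p$ is the node of $F$), whereas $i(L_1,L_2;p)=(\sL)^2=a^2(-K_V)^2+4ab\le 4a(a+b)$ precisely because $(-K_V)^2\le 4$ — a contradiction. Note that in the borderline example above Corti's inequality reads $4>4$, so the strictness of that deep input is exactly what is needed; elementary genus and fiber-intersection estimates of the kind you propose cannot replace it.
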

\begin{proof}
The assertion $a>0$ can be easily seen by $0 \le (\sL \cdot F) = 2a$ and $0 < (\sL )^2 = a(da+4b)$. 
Suppose $b \ge 0$. 
Let $\Phi _{\sL}:V \dashrightarrow \bP ^1_k$ be the rational map associate to $\sL$, and let $\psi: \bar{V} \to V$ be the shortest succession of blow-ups $p \in \Bs (\sL )$ and its infinitely near points such that the proper transform $\bar{\sL} := \psi ^{-1}_{\ast}(\sL )$ of $\sL$ is free of base points to give rise to a morphism $\bar{\varphi} := \Phi _{\bar{\sL}} \circ \psi$. 
Notice that $\psi$ is defined over $k$ by construction. 
Letting ${\{ \bar{E}_i \}}_{1\le i \le n}$ be the exceptional divisors of $\psi$ with $\bar{E}_n$ the last exceptional one, which is a section of $\bar{\varphi}$, we have: 
\begin{align}\label{en}
(\bar{\sL} \cdot \bar{E}_i) = \left\{ \begin{array}{ll} 0 & \text{if}\ \ 1 \le i \le n-1 \\ 1 & \text{if}\ \ i=n \end{array} \right. 
\end{align}
and
\begin{align}\label{lc}
K_{\bar{V}} - \frac{b}{a} \psi ^{\ast}(F) + \frac{1}{a} \bar{\sL} = \psi ^{\ast} \left( K_V - \frac{b}{a}F + \frac{1}{a} \sL \right) + \sum _{i=1}^nc_i \bar{E}_i
\end{align}
for some rational numbers $c_1,\dots ,c_n$. 
As $a>0$, $b \ge 0$ and $(\bar{\sL})^2 = 0$, we have:
\begin{align*}
-2 &= (\bar{\sL} \cdot K_{\bar{V}}) \\
&= \left( \bar{\sL} \cdot K_{\bar{V}} + \frac{1}{a}\bar{\sL} \right) \\
&\ge \left( \bar{\sL} \cdot K_{\bar{V}} - \frac{b}{a} \psi ^{\ast}(F)   + \frac{1}{a}\bar{\sL} \right) \\
&\underset{(\ref{lc})}{=} \left( \bar{\sL} \cdot \psi ^{\ast} \left( K_V - \frac{b}{a}F + \frac{1}{a} \sL \right) \right) + \sum _{i=1}^nc_i (\bar{\sL} \cdot \bar{E}_i ) \\
&\underset{(\ref{en})}{=} \left( \bar{\sL} \cdot \psi ^{\ast} \left( K_V - \frac{b}{a}F + \frac{1}{a} \sL \right) \right) + c_n .
\end{align*}
Since $K_V - \frac{b}{a}F + \frac{1}{a} \sL \sim _{\bQ} 0$, we have $c_n \le -2$. 
This implies that $(V ,-\frac{b}{a}F+\frac{1}{a}\sL )$ is not log canonical at $p$. 
We will consider whether $F$ is smooth or not in what follows. 

{\it In the case that $F$ is smooth:} 
By {\cite[Theorem 3.1 (1)]{Cor00}}, we have: 
\begin{align} \label{Corti(1)}
i(L_1,L_2;p) > 4\left( 1+ \frac{b}{a} \right) a^2 = 4a(a+b), 
\end{align}
where $L_1$ and $L_2$ are general members of $\sL$. 
Meanwhile, since $L_1$ and $L_2$ meet at only $p$, the left hand side of (\ref{Corti(1)}) can be written as: 
\begin{align*}
i(L_1,L_2;p) &= (\sL )^2 = (-K_V)^2a^2+4ab \le 4a(a+b),
\end{align*}
where we recall that $(-K_V)^2 \le 4$. It is a contradiction to (\ref{Corti(1)}). 

{\it In the case that $F$ is not smooth:} 
We then know that $F_{\kc}$ is a singular fiber of $\pi _{\kc}$, in particular, $F_{\kc}$ consists of exactly two irreducible components $F_1$ and $F_2$ meeting transversely at $p$. 
Hence, $(V ,-\frac{b}{a}F_1-\frac{b}{a}F_2+\frac{1}{a}\sL )$ is not log canonical at $p$. 
By {\cite[Theorem 3.1]{Cor00}}, we have: 
\begin{align} \label{Corti(2)}
i(L_1,L_2;p) > 4\left\{ \left(1+ \frac{b}{a}\right) + \left( 1+ \frac{b}{a}\right)  -1 \right\} a^2 = 4a(a+2b), 
\end{align}
where $L_1$ and $L_2$ are general members of $\sL$. 
By the similar argument as above, we have: 
\begin{align*}
i(L_1,L_2;p) \le 4a(a+b) \le 4a(a+2b), 
\end{align*}
which is a contradiction to (\ref{Corti(2)}). 
\end{proof}
\section{Birational maps between geometrically rational projective surfaces}\label{3}
In this section, we recall birational maps between smooth minimal geometrically rational projective surfaces over a perfect field $k$. 
We refer to {\cite{Isk96}}. 

In order to state some results briefly, 
$\fD$ denotes a family of smooth del Pezzo surfaces of Picard rank one defined over $k$ and $\fC$ denotes a family of smooth geometrically rational projective surfaces of Picard rank two with a Mori conic bundle structure over $k$. 
By Lemma \ref{minimal}, any smooth minimal geometrically rational projective surface defined over $k$ is included in $\fD \cup \fC$. 
We then note the following result: 
\begin{lem}[{\cite[Theorem 1.6]{Isk96}}]\label{lem(3-1)}
Let $\chi : V \dashrightarrow W$ be a birational map between $V,W \in \fD \cup \fC$. Then: 
\begin{enumerate}
\item If $V \in \fD$ and the number of irreducible components of the $\Gal$-orbit of $x$ is equal to or more than $(-K_V)^2$ for any closed point $x \in V_{\kc}$, then $\chi$ is isomorphic, in other words, $V$ is birational rigid. 
\item If $V \in \fC$ and $(-K_V)^2 \le 0$, then $W \in \fC$ and $(-K_W)^2=(-K_V)^2$. 
\end{enumerate}
\end{lem}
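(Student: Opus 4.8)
The plan is to run the two-dimensional Sarkisov program over $k$, following Iskovskikh {\cite{Isk96}}: every birational map $\chi : V \dashrightarrow W$ between elements of $\fD \cup \fC$ decomposes as a finite composition of \emph{elementary links} of types I--IV (the surface minimal model theory used here is available over any perfect field, in arbitrary characteristic). First I would record the bookkeeping from Iskovskikh's classification of these links. A link of type I has source $V' \in \fD$ and target $W' \in \fC$: it blows up one closed point $P \in V'$, that is, a $\Gal$-orbit of $\kc$-points, of degree $\ell := [k(P):k]$, and $(-K_{W'})^2 = (-K_{V'})^2 - \ell$; moreover $\ell < (-K_{V'})^2$. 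The last inequality I would obtain by writing the fibre class $f$ of the conic bundle $W'$, which satisfies $f^2 = 0$ and $(f \cdot (-K_{W'})) = 2$, pushing it forward along the blow-up $\sigma : W' \to V'$, using $\rho _k(V') = 1$ to write $\sigma _{\ast}f \equiv t(-K_{V'})$ with $t > 0$, and comparing intersection numbers with the exceptional curves of $\sigma$. In particular, a conic bundle produced by a type I link always has positive $(-K)^2$. A link of type III is the inverse of one of type I. A link of type II is either a map $\fD \dashrightarrow \fD$, or a fibrewise modification $\fC \dashrightarrow \fC$ over a fixed base curve; in the latter case both endpoints have Picard rank two, so in any resolution the numbers of blow-ups and of blow-downs agree and $(-K)^2$ is preserved. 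A link of type IV leaves the underlying surface unchanged and only replaces one conic bundle structure by another.

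Granting all of this, assertion (2) follows by a short induction. Let $V \in \fC$ with $(-K_V)^2 \le 0$, and write $\chi = \lambda _n \circ \cdots \circ \lambda _1$ with $\lambda _i : V_{i-1} \dashrightarrow V_i$, $V_0 = V$, $V_n = W$. I claim that $V_i \in \fC$ and $(-K_{V_i})^2 = (-K_V)^2$ for every $i$; the case $i = n$ is assertion (2). Assume the claim for $i-1$, so $(-K_{V_{i-1}})^2 \le 0$. Since $\fD$ and $\fC$ are disjoint (their Picard numbers differ), $\lambda _i$ is not of type I. It is not of type III either: otherwise $\lambda _i^{-1}$ would be a link of type I from $V_i \in \fD$, forcing $(-K_{V_{i-1}})^2 = (-K_{V_i})^2 - \ell > 0$ by the estimate above, a contradiction. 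Hence $\lambda _i$ is of type II or of type IV. A link of type IV leaves the surface, and hence $(-K)^2$ and membership in $\fC$, unchanged; a link of type II from $V_{i-1} \in \fC$ is a map $\fC \dashrightarrow \fC$ preserving $(-K)^2$. In either case $V_i \in \fC$ with $(-K_{V_i})^2 = (-K_{V_{i-1}})^2 = (-K_V)^2$, which completes the induction.

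For assertion (1), put $d := (-K_V)^2$ and assume that every closed point of $V$ has degree $\ge d$ over $k$. Suppose that a birational map $\chi : V \dashrightarrow W$ into $\fD \cup \fC$ is not an isomorphism, and let $\sM$ be the strict transform on $V$ of a suitably positive linear system coming from the Mori fibre space $W$; since $\rho _k(V) = 1$ we may write $\sM \sim _{\bQ} -\mu K_V$ for some $\mu > 0$. By the Noether--Fano inequality for Mori fibre surfaces, $\chi$ not being an isomorphism forces the pair $(V, \tfrac{1}{\mu}\sM)$ to fail to be canonical; by the standard reduction for surfaces --- a negative discrepancy over an infinitely near point forces a negative discrepancy over the blow-up of its image in $V$ --- there is a closed point $p \in V$ with $\mult _p \sM > \mu$. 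Since $\rho _k(V) = 1$, two general members of $\sM$ meet only along the $\Gal$-orbit of $p$, so B\'ezout gives
\begin{align*}
\mu ^2 d \;=\; (\sM)^2 \;\ge\; [k(p):k]\cdot(\mult _p \sM)^2 \;>\; [k(p):k]\cdot \mu ^2 ,
\end{align*}
whence $[k(p):k] < d$, contradicting the hypothesis. Therefore $\chi$ is an isomorphism; that is, $V$ is birationally rigid.

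The step I expect to be the real obstacle is everything imported from {\cite{Isk96}}: the factorization of an arbitrary birational map between geometrically rational Mori fibre surfaces into elementary links, together with the precise classification of those links --- in particular, the shape of the type II links on conic bundles. This rests on a $\Gal$-equivariant, relative Noether--Fano analysis of maximal singularities followed by a finite case check, and is exactly the content of {\cite[Theorem 1.6]{Isk96}}, which in the paper I would simply cite. Granting it, the remaining arguments --- the induction for (2) and the Noether--Fano/B\'ezout estimate for (1) --- are routine, the only delicate point being the surface reduction that moves the maximal singularity to a point of $V$ itself.
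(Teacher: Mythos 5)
The paper contains no proof of this statement: it is quoted wholesale from \cite[Theorem 1.6]{Isk96}, exactly as the bracketed attribution indicates, and the link factorization and classification that you build on are themselves only quoted later as separate lemmas (\cite[Theorems 2.5 and 2.6]{Isk96}). So your sketch cannot match a proof in the paper; what it does is reconstruct Iskovskikh's own argument (elementary links plus a Noether--Fano/maximal-singularity estimate), which is the right route, and your part (1) is the standard rigidity computation and in fact does not need the link machinery at all. One genuine attribution problem: you say the factorization and classification of links are ``exactly the content of \cite[Theorem 1.6]{Isk96}, which I would simply cite'' --- but Theorem 1.6 is precisely the statement you are proving; the inputs you may cite without circularity are Theorems 2.5 and 2.6 of \cite{Isk96}.

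Two mathematical points in the sketch also need repair. First, your justification that a type II link between conic bundles preserves $(-K)^2$ (``both endpoints have Picard rank two, so the numbers of blow-ups and blow-downs agree'') is insufficient: blowing up a $\Gal$-orbit of $\ell$ geometric points raises $\rho _k$ by one regardless of $\ell$, so equality of $k$-Picard ranks does not force the blown-up and contracted orbits to have the same number of geometric components, while $(-K)^2$ changes by exactly that number. What you actually need is the explicit description of type II conic-bundle links in \cite[Theorem 2.6]{Isk96} (blow up an orbit of degree $d$ whose points lie on distinct fibers and contract precisely those $d$ fiber components), i.e.\ the classification you are importing anyway. Second, in part (1) the claim that two general members of $\sM$ meet only along the $\Gal$-orbit of $p$ is neither needed nor true in general; the inequality $(\sM )^2 \ge [k(p):k]\,(\mult _p \sM )^2$ holds simply because the orbit of $p$ contributes at least that much to the intersection, so drop the extraneous claim. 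With these corrections the sketch is a faithful (if compressed) reconstruction of the cited proof.
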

Now, we quickly review elementary links between smooth geometrically rational projective surfaces over a perfect field: 
\begin{defin}\label{def(3-1)}
Let $V$ and $V'$ be two smooth geometrically rational projective surfaces with $V,V' \in \fD \cup \fC$. 
Then we shall define elementary links of type (I)--(IV) as follows: 
\begin{enumerate}
\item We say that the following commutative diagram is called an {\it elementary link of type (I)}: 
\begin{align*}
\xymatrix{
V \ar[d] & & & V' \ar[lll]_-{\sigma} \ar[d]^-{\pi '} \\
{\Spec (k)} & & & B' \ar[lll]
}
\end{align*}
where $V \in \fD$, $V' \in \fC$, $\pi '$ is a Mori conic bundle, and $\sigma$ is a blowing-up a $\Gal$-orbit of a closed point on $V_{\kc}$. 
\item We say that the following commutative diagram is called an {\it elementary link of type (II)}: 
\begin{align*}
\xymatrix@C=40pt{
V \ar[d]_-{\pi} & Z \ar[l]_-{\sigma} \ar[r]^-{\sigma '} & V' \ar[d]^-{\pi '} \\
B \ar@{=}[rr] & & B' 
}
\end{align*}
where either $V,V' \in \fD$ or $V,V' \in \fC$. 
Moreover, $\sigma$ is a blowing-up a $\Gal$-orbit of a closed point on $V_{\kc}$, $\sigma '$ is a blowing-up a $\Gal$-orbit of a closed point on $V_{\kc}'$, and $\pi$ and $\pi '$ are structure morphisms (resp. Mori conic bundles) if $V,V' \in \fD$ (resp. $V,V' \in \fC$).  
\item We say that the following commutative diagram is called an {\it elementary link of type (III)}: 
\begin{align*}
\xymatrix{
V \ar[d]_-{\pi} \ar[rrr]^-{\sigma '} & & & V' \ar[d] \\
B \ar[rrr] & & & {\Spec (k)} 
}
\end{align*}
where $V \in \fC$, $V' \in \fD$, $\pi$ is a Mori conic bundle, and $\sigma '$ is a blowing-up a $\Gal$-orbit of a closed point on $V_{\kc}'$. 
\item We say that the following commutative diagram is called an {\it elementary link of type (IV)}: 
\begin{align*}
\xymatrix{
V \ar[d]_-{\pi} \ar@{=}[rr] & & V' \ar[d]^-{\pi '}\\
B \ar[r] & {\Spec (k)} & B' \ar[l]
}
\end{align*}
where $V=V' \in \fC$, and $\pi$ and $\pi '$ are different Mori conic bundles. 
\end{enumerate}
\end{defin}

\begin{lem}[{\cite[Theorem 2.5]{Isk96}}]\label{lem(3-2)}
Any birational map over $k$ between smooth minimal geometrically rational projective surfaces splits into a composition of finitely many elementary links of types (I)--(IV). 
\end{lem}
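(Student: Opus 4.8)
The plan is to run the two-dimensional Sarkisov program, untwisting $\chi$ one maximal singularity at a time. By Lemma \ref{minimal} both $V$ and $W$ lie in $\fD \cup \fC$, so each carries a Mori fibre space structure over $k$ (the structure morphism to $\Spec(k)$ if it is in $\fD$, a Mori conic bundle over a curve if it is in $\fC$). First I would fix on $W$ a ``very positive'' base-point-free linear system $\sM _W$: a large multiple of $-K_W$ when $W \in \fD$, and a system with $\sM_W \sim_\bQ -\mu _0 K_W + n F_W$ for suitable positive $\mu _0, n$ when $W \in \fC$. Its proper transform $\sM := \chi ^{-1}_{\ast}(\sM _W)$ on $V$ has no fixed component, and I record the threshold $\mu \in \bQ _{>0}$ determined by $\sM \sim _{\bQ} \mu (-K_V)$ (plus a multiple of the fibre class when $V \in \fC$), together with the Galois-stable collection of (possibly infinitely near) base points of $\tfrac 1\mu \sM$ and their multiplicities. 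These data assemble into the \emph{Sarkisov degree} of $(V, \sM)$, a tuple valued in a well-ordered set.

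The engine of the argument is the Noether--Fano--Iskovskikh inequality: if $\chi$ is \emph{not} an isomorphism of Mori fibre spaces, then either the pair $(V, \tfrac 1\mu \sM)$ fails to be canonical --- it has a \emph{maximal singularity}, a geometric valuation $\nu$ over $V$ with discrepancy $a(\nu; V, \tfrac1\mu \sM) < 0$, whose centre is a $\Gal$-orbit of closed points of $V_{\kc}$ --- or $V \in \fC$ and the threshold against the fibre class forces a competing conic-bundle structure. In the first case I would blow up that centre and run the relative MMP on the resulting surface; by Lemma \ref{lem(3-1)} together with the degree restrictions of Lemmas \ref{degree} and \ref{MCB} the output again lies in $\fD \cup \fC$, and by inspecting the possibilities for the extremal contractions on a surface of Picard rank $\le 3$ over $k$ one checks that the whole operation is precisely one elementary link of type (I), (II) or (III) as in Definition \ref{def(3-1)}. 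In the second case one performs the type-(IV) change of conic bundle directly.

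Write $\sigma : V \dashrightarrow V_1$ for the link just performed, with $V_1 \in \fD \cup \fC$, and set $\chi _1 := \chi \circ \sigma ^{-1} : V_1 \dashrightarrow W$ and $\sM _1 := \sigma _{\ast}(\sM )$. The crucial claim is that the Sarkisov degree of $(V_1, \sM _1)$ is \emph{strictly smaller} than that of $(V, \sM )$: the threshold $\mu$ never increases, and when it is unchanged the multiplicity data strictly drop, because the link was centred at the worst singularity of $\tfrac1\mu \sM$. Since the degree lives in a well-ordered set, the process terminates: after finitely many links I reach $\chi _n : V_n \dashrightarrow W$ for which no maximal singularity and no fibre-class obstruction remains, whereupon the Noether--Fano criterion forces $\chi _n$ to be an isomorphism of surfaces, compatible with the Mori fibre space structures after at most one further link of type (IV) or (II). Concatenating the links exhibits $\chi$ as a finite composition of elementary links of types (I)--(IV).

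The step I expect to be the main obstacle is termination --- proving that the Sarkisov degree really is a well-ordered invariant that strictly decreases at \emph{every} link. Over an algebraically non-closed (but perfect) field this is delicate because base points, multiplicities and extremal rays must be tracked $\Gal$-equivariantly: the multiplicity entering the degree is a sum over a $\Gal$-orbit, and one has to verify that the Noether--Fano inequality, a priori a statement over $\kc$, descends to $k$ compatibly with the Galois action. A secondary but lengthy point is the case analysis confirming that every untwisting of a maximal centre is literally one of the four links of Definition \ref{def(3-1)} and keeps the surface inside $\fD \cup \fC$ --- which is exactly what Lemmas \ref{lem(3-1)}, \ref{degree} and \ref{MCB} were prepared for.
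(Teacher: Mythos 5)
This lemma is not proved in the paper at all: it is imported verbatim from Iskovskikh ({\cite[Theorem 2.5]{Isk96}}), so there is no internal argument to compare yours with. What you propose is an outline of the two-dimensional Sarkisov program over a perfect field, and that is indeed the method behind the cited theorem, so the strategy (Noether--Fano--Iskovskikh inequality, untwisting a maximal $\Gal$-orbit by a link, decrease of a Sarkisov degree, termination) is the right one.

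As a proof, however, the proposal is only a plan. The two items you yourself flag as the expected obstacles --- the $\Gal$-equivariant Noether--Fano inequality together with the strict, well-ordered decrease of the degree at \emph{every} link, and the case analysis showing that each untwisting is literally one of the four links of Definition \ref{def(3-1)} and keeps the surface in $\fD \cup \fC$ --- are exactly the mathematical content of Iskovskikh's Theorems 2.5 and 2.6; with them deferred, nothing is established. Two further cautions. First, you invoke Lemma \ref{lem(3-1)} (and implicitly the link classification behind Lemma \ref{lem(3-3)}), but in this paper those are themselves quotations from \cite{Isk96} whose proofs rest on the same Noether--Fano/untwisting machinery, so using them inside a purported proof of Lemma \ref{lem(3-2)} is circular unless you re-prove them independently. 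Second, the termination claim as stated (threshold never increases, multiplicity data strictly drop when it is unchanged) is too coarse: after links of type (III) or (IV) the mobile system and its base data can be unchanged or reorganized relative to a new fibration, and one needs the full lexicographic degree (including an invariant such as the number of maximal base points or crepant divisors, tracked over $\Gal$-orbits) plus an argument excluding infinite chains with constant threshold; this is where the real work in \cite{Isk96} lies, not a routine verification.
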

Notice that the classification of elementary links between smooth minimal geometrically rational projective surfaces over a perfect field is known (see {\cite[Theorem 2.6]{Isk96}}, for details). 
In particular, we will use the following result: 
\begin{lem}\label{lem(3-3)}
Let $\chi :V \dashrightarrow V'$ be an elementary link. Then: 
\begin{enumerate}
\item If $V \in \fD$ and $(-K_V)^2 \le 3$, then $V' \in \fD$ and $(-K_{V'})^2=(-K_V)^2$. 
\item If $V \in \fD$, $V(k)=\emptyset$ and $(-K_V)^2 \not= 8$, then $V' \in \fD$ and $(-K_{V'})^2=(-K_V)^2$. 
\end{enumerate}
\end{lem}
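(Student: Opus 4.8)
The plan is to prove Lemma \ref{lem(3-3)} by invoking the classification of elementary links between smooth minimal geometrically rational projective surfaces over a perfect field, as recorded in {\cite[Theorem 2.6]{Isk96}}, together with the structural results already assembled above (Lemmas \ref{lem(3-1)} and \ref{degree}). The elementary links were sorted into types (I)--(IV) in Definition \ref{def(3-1)}, so it suffices to trace what happens to $(-K)^2$ and to the membership in $\fD$ under each type, under the respective hypotheses of (1) and (2).

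\medskip
First I would dispose of the easy types. A link of type (IV) has $V=V'$, so there is nothing to prove; a link of type (II) with $V,V' \in \fD$ keeps $V'$ in $\fD$, and one checks on $(-K)^2$ directly from Iskovskikh's table that such a link exists only when the degree is preserved in the relevant range; a link of type (II) with $V,V' \in \fC$ has $V' \in \fC$ (not $\fD$), so it cannot occur under the stated hypotheses once we know $V \in \fD$ forces the target to stay in $\fD$. The real content is therefore to rule out links of type (I) and type (III): a type-(I) link sends $V \in \fD$ to $V' \in \fC$, and a type-(III) link sends $V \in \fC$ to $V' \in \fD$, so neither can have $V \in \fD$ as its source in case (III)'s direction, but a type-(I) link does have $V \in \fD$ as source and must be excluded.

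\medskip
For part (1): if $(-K_V)^2 \le 3$ and $V \in \fD$, I would use Lemma \ref{lem(3-1)}(1) — birational rigidity of del Pezzo surfaces of small degree — to argue that no nontrivial elementary link with source $V$ exists at all once the Galois-orbit condition holds; but since the statement allows arbitrary such $V$, I would instead read off from {\cite[Theorem 2.6]{Isk96}} that for $V \in \fD$ with $(-K_V)^2 \le 3$ the only possible link types are of type (II) to another $V' \in \fD$ of the same degree (a Bertini or Geiser-type involution, or a similar degree-preserving link). The key point is that a type-(I) link blows up a $\Gal$-orbit and lands in $\fC$ with $(-K_{V'})^2 = (-K_V)^2 - (\text{orbit length})$; Iskovskikh's classification shows this requires $(-K_V)^2$ large enough that the target is still a valid Mori conic bundle surface, which fails when $(-K_V)^2 \le 3$. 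Hence $V' \in \fD$ and $(-K_{V'})^2 = (-K_V)^2$.

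\medskip
For part (2): if $V \in \fD$, $V(k) = \emptyset$ and $(-K_V)^2 \ne 8$, I would again consult {\cite[Theorem 2.6]{Isk96}}. A type-(I) link requires blowing up a $\Gal$-orbit consisting of a single $k$-rational point or, more generally, a $\Gal$-orbit whose length is small compared to $(-K_V)^2$; the crucial observation is that the Mori conic bundle target forces $(-K_{V'})^2 = 8(1-q) - \#\{\text{singular fibers}\} \le 8$, and tracking the degree drop, together with $V(k) = \emptyset$ (which obstructs certain short orbits and hence certain links), shows that the only surviving possibility is a degree-preserving type-(II) link to $V' \in \fD$; the excluded case $(-K_V)^2 = 8$ is exactly where a link to $\bP^1_k \times \bP^1_k$-type surfaces in $\fC$ or to $\bP^2_k$ can occur. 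I expect the main obstacle to be bookkeeping: correctly matching each row of Iskovskikh's classification table with the degree and rationality constraints, and making sure no exotic link is overlooked — in particular confirming that when $V(k)=\emptyset$ the relevant short $\Gal$-orbits do not exist, so that the links of type (I) and (III) and the degree-changing links of type (II) are genuinely excluded. Once the table analysis is organized by link type, each case is a short verification and the conclusion $V' \in \fD$ with $(-K_{V'})^2 = (-K_V)^2$ follows.
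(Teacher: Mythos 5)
Your proposal takes essentially the same route as the paper: the paper's entire proof is a citation of Iskovskikh's classification of elementary links ({\cite[Theorem 2.6]{Isk96}}), and your type-by-type reading of that classification is just an expanded version of the same argument, so the conclusion is fine. One small caveat: a few of your side justifications are not the real reason the table looks the way it does---for instance, a type (I) link is not excluded for $(-K_V)^2 \le 3$ because the target would fail to be a ``valid'' Mori conic bundle surface (conic bundles with $(-K)^2$ small, even nonpositive, certainly exist), but rather because Iskovskikh's two-ray game analysis shows no such link arises from a minimal del Pezzo surface of low degree; since you ultimately defer to the cited classification, this does not affect correctness.
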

\begin{proof}
See {\cite[Theorem 2.6]{Isk96}}. 
\end{proof}
\begin{rem}
For any $V \in \fD$ with $(-K_V)^2=5$, we note that $V$ always has a $k$-rational point ({\cite{Swi72}}). 
\end{rem}
The following result will play important role in \S \ref{5}: 
\begin{lem}\label{lem(3-4)}
Let $\chi : V \dashrightarrow W$ be a birational map between smooth minimal projective surfaces defined over $k$. 
Then the following four assertions hold: 
\begin{enumerate}
\item Assume that $V \in \fD$ and $(-K_V)^2 \le 4$. 
Then $(-K_W)^2 \le (-K_V)^2$. 
\item Assume that $V \in \fD$ and $V(k) = \emptyset$. 
Then $W \in \fD$, $W(k) = \emptyset$ and $(-K_W)^2 = (-K_V)^2$. 
\item Assume that $V \in \fC$, $V$ is minimal and $(-K_V)^2<8$. 
Then $W \in \fC$ and $(-K_W)^2 = (-K_V)^2$. 
\item Assume that $V \in \fC$ and $V$ is a $k$-form of $\bP ^1_{\kc} \times \bP ^1_{\kc}$ and has no Mori conic bundle structure with a section defined over $k$. 
Then $W$ satisfies all assumptions of $V$. 
\end{enumerate}
\end{lem}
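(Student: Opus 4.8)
The plan is to reduce to the effect of a single elementary link and then to read off the outcome from the classification of such links. By Lemma~\ref{lem(3-2)}, $\chi$ factors as $\chi=\chi_m\circ\cdots\circ\chi_1$ with each $\chi_i\colon V_{i-1}\dashrightarrow V_i$ an elementary link of one of the types (I)--(IV) of Definition~\ref{def(3-1)}, $V_0=V$, $V_m=W$, and $V_i\in\fD\cup\fC$. Since a member of $\fC$ need not be $k$-minimal, I would not iterate a statement about $k$-minimal surfaces, but instead isolate, for each of (1)--(4), a class of surfaces in $\fD\cup\fC$ that is stable under a single elementary link together with the pertinent numerical control, check that it contains $V$, and only at the end invoke that $W$ is $k$-minimal (and Lemma~\ref{minimal}). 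The four classes are: for (1) and (3), the surfaces in $\fD$ with $(-K)^2\le4$ together with the surfaces in $\fC$ with $(-K)^2\le4$; for (2), the surfaces in $\fD$ with no $k$-point; for (4), the surfaces in $\fC$ that are $k$-forms of $\bP^1_{\kc}\times\bP^1_{\kc}$ carrying no Mori conic bundle structure with a section over $k$.

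A good deal is already in place. If $V\in\fD$ with $(-K_V)^2\le3$, Lemma~\ref{lem(3-3)}(1) gives $V_1\in\fD$ with the same degree, so inductively $(-K_W)^2=(-K_V)^2$. If $V\in\fD$ with $V(k)=\emptyset$ and $(-K_V)^2\ne8$, Lemma~\ref{lem(3-3)}(2) keeps $V_1\in\fD$ with the same degree, and $V_1(k)=\emptyset$ follows by tracing a hypothetical $k$-point through the two blow-ups forming the link: it is a $k$-point of the middle surface $Z$, hence of $V$; or it is the centre of $Z\to V_1$, and then its exceptional $\bP^1_k$ maps onto $V$ and again produces a $k$-point of $V$. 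If $V\in\fC$ with $(-K_V)^2\le0$, Lemma~\ref{lem(3-1)}(2) applied directly to $\chi$ yields $W\in\fC$ with $(-K_W)^2=(-K_V)^2$; together with Lemma~\ref{degree}, which forces the degree of a $k$-minimal surface in $\fC$ into $\{8\}\cup\{\,d\le4\,\}$, this settles the $(-K_V)^2\le0$ part of (3). The same type of tracing, and the remark that a section over $k$ of $V_1$ pulls back through the blow-ups to a section over $k$ of $V$, is what controls the qualitative invariants in (2) and (4).

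What remains is a finite list of cases, to be matched against Iskovskikh's classification of elementary links ({\cite[Theorem~2.6]{Isk96}}): a link out of $V\in\fD$ with $(-K_V)^2=4$, which lands in $\fD$ of degree $4$ (a type~(II) link) or in $\fC$ of degree $<4$ (a type~(I) link), so that in both cases one stays in the class of (1); a link out of $V\in\fC$ with $1\le(-K_V)^2\le4$, for which a type~(IV) link leaves $V$, hence the degree, unchanged, a type~(II) link preserves the fibration over the fixed base and hence the degree, and a type~(III) link into $\fD$ has to be excluded; a link out of $V\in\fD$ with $V(k)=\emptyset$ and $(-K_V)^2=8$, i.e.\ a $k$-minimal pointless quadric surface; and a link out of a surface in $\fC$ that is a $k$-form of $\bP^1_{\kc}\times\bP^1_{\kc}$ with no section over $k$, which the classification shows admits only type~(II) and type~(IV) links and which lands again among such forms once one observes that a $\bP^1$-bundle geometrically isomorphic to $\bF_m$ with $m\ge1$ carries a section over $k$. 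The main obstacle is the conic bundle case: ruling out type~(III) links out of conic bundles of degree $1,2,3,4$ --- equivalently, showing that no del Pezzo surface of degree $5,6,8,9$ occurs as the target of such a link in Iskovskikh's table --- and, dually, pinning down precisely which surfaces a degree-$4$ del Pezzo surface of Picard rank one can be linked to. With those settled, the rest is routine bookkeeping of $(-K)^2$ along blow-ups together with the tracing arguments above.
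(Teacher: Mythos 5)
Your overall strategy (decompose via Lemma \ref{lem(3-2)} and track invariants link by link against the classification of elementary links) is a legitimate route, and parts of it coincide with the paper's proof, which also uses Lemmas \ref{lem(3-1)}, \ref{lem(3-2)} and \ref{lem(3-3)} for the degree $\le 3$ del Pezzo case, the pointless case of degree $\ne 8$, and conic bundles with $(-K_V)^2\le 0$. But the proposal has a genuine gap: precisely the cases you label ``the main obstacle'' --- a degree-$4$ del Pezzo surface of Picard rank one with a rational point, a pointless del Pezzo surface with $(-K_V)^2=8$, and conic bundles of degree $1,2,4$ (ruling out links of type (III) and controlling where type (II) links out of the degree-$4$ del Pezzo surface can land) --- are exactly where the real content of the lemma lies, and you leave them unverified, asserting they are ``routine bookkeeping'' to be ``matched against'' Iskovskikh's table. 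The paper does not re-derive these from the link classification at all: it invokes the stronger global statements \cite[Theorems 4.6 and 4.9]{Isk96}, which directly describe all $k$-minimal surfaces birational to such a $V$ (for instance, Theorem 4.6 yields $(-K_W)^2\in\{3,4\}$ in the degree-$4$ case with a point, and $W\in\fD$, $(-K_W)^2=8$, $W(k)=\emptyset$ in the pointless degree-$8$ case, where $(-K_W)^2\ne 6$ is excluded by Lemma \ref{degree}). Without either citing these theorems or actually carrying out the case check you defer, assertions (1)--(3) are not proved.

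There is also a specific gap in (4). Even granting (via \cite[Theorem 4.9]{Isk96}, which you do not cite) that $\chi$ is a composition of links of types (II) and (IV), your argument only shows that the fibration $\pi'$ on $W$ compatible with the link has no section over $k$, plus that $W_{\kc}\not\simeq\bF_m$ for $m\ge 1$. It does not exclude a section over $k$ of the \emph{other} ruling $\pi''$ of $W$: a section of $\pi''$ has class $F'+a''F''$ and its birational transform under the elementary transformation is in general only a multisection of $\pi$ on $V$, so ``a section of $V_1$ pulls back to a section of $V$'' fails for that ruling. The paper needs a separate argument here: if $\pi''$ had a section $\Gamma''$, then since $\pi'$ has no section the coefficient $a''$ must be even, whence $F'\in\Pic(W)$, forcing $B\simeq\bP^1_k$ by Lemma \ref{BS} and hence $V\simeq\bP^1_k\times B'''$, whose second projection admits a section --- contradicting the hypothesis on $V$. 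Some argument of this kind (a Picard-lattice/parity computation on $W$ transported back to $V$) is indispensable and is missing from your sketch.
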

\begin{proof}
In (1), we firstly consider the case $(-K_V)^2=4$ and $V(k) \not= \emptyset$. 
Then by Lemma {\cite[Theorem 4.6]{Isk96}} we obtain that $(-K_W)^2=3$ or $4$. 
Hence, this assertion is true. 
The remaining cases follows from Lemmas \ref{lem(3-1)} (1), \ref{lem(3-2)} and \ref{lem(3-3)}. 

In (2), we firstly consider the case $(-K_V)^2=8$. 
By {\cite[Theorem 4.6]{Isk96}}, we then obtain $W \in \fD$, $(-K_W)^2=8$ and $W(k) = \emptyset$, where we note $(-K_W)^2 \not= 6$ by Lemma \ref{degree} because $W$ is $k$-minimal. 
Hence, this assertion is true. 
The remaining cases follows from Lemmas \ref{lem(3-1)} (1), \ref{lem(3-2)} and \ref{lem(3-3)}. 

In (3), since $V$ is minimal, we note that $(-K_V)^2 \in \{ 1,2,4\}$ or $(-K_V)^2 \le 0$ by Lemma \ref{degree}. 
If $(-K_V)^2>0$, then by Lemma {\cite[Theorem 4.9]{Isk96}} we obtain $W \in \fC$ and $(-K_W)^2=(-K_V)^2$. 
Otherwise, it follows from Lemma \ref{lem(3-1)} (2). 

In (4), by {\cite[Theorem 4.9]{Isk96}} $\chi$ splits into a composition of elementary links of types (II) and (IV). 
Hence, it is enough to show the case that $\chi$ is an elementary link of (II) or (IV). 
If $\chi$ is of type (VI), the assertion (4) is clearly true because of $W=V$. 
Hence, in what follows, we may assume that $\chi$ is of type (II). 
In other words, letting $\pi :V \to B$ be a Mori conic bundle, $W \in \fC$ and $\pi ' \circ \varphi :W \to B$ is a Mori conic bundle such that $\pi = \pi ' \circ \chi$. 
Then we further see by {\cite[Theorem 2.6 (ii)]{Isk96}} that $\varphi$ splits into a blowing-up of the $\Gal$-orbit of a closed point $x$ on $V_{\kc}$ and the contraction of proper transforms of fibers including the $\Gal$-orbit of $x$. 
Hence, $\pi '$ admits no section defined over $k$ because so does $\pi$. 
Moreover, $W$ is also $k$-form of $\bP ^1_{\kc} \times \bP ^1_{\kc}$. 
Now, suppose that there exists a Mori conic bundle $\pi '': W \to B''$ admitting a section defined over $k$. 
Letting $\Gamma ''$ be a section of $\pi ''$ defined over $k$, we can write $\Gamma _{\kc} '' \sim F' + a''F''$ for some integer $a''$, where $F'$ and $F''$ are closed fibers of $\pi '$ and $\pi ''$, respectively. 
Since $\pi '$ does not admit any section, $a''$ must be even. 
Hence, we have $F' \in \Pic (W)$ by virtue of $2F'' \in \Pic (W)$. 
This implies that $\pi '_{\kc}$ contains a closed fiber defined over $k$, in particular, this image of this via $\pi '_{\kc}$ is a $k$-rational point on $B_{\kc}$. 
By Lemma \ref{BS}, we thus obtain $B \simeq \bP ^1_k$. 
In other words, $V \simeq \bP ^1_k \times B'''$ for some projective curve $B'''$, so that the second projection $V \simeq \bP ^1_k \times B''' \to B'''$, which is obviously a Mori conic bundle, admits a section defined over $k$. 
This is a contradiction to the hypothesis. 
Therefore, $W$ has no Mori conic bundle structure with a section defined over $k$. 
\end{proof}
\section{Proof of Theorem \ref{main(1)}}\label{4}
In this section, we will prove Theorem \ref{main(1)}. 
Let $V$ be a smooth minimal geometrically rational projective surface, whose $-K_V$ is not ample, defined over $k$. 
Since $V$ is not a del Pezzo surface, $\rho _k(V)=2$ and $V$ is endowed with a structure of Mori conic bundle $\pi :V \to B$ defined over $k$. 

At first, we shall consider the case $(-K_V)^2=8$. 
\begin{lem}\label{lem(4-1)}
With the notation as above, assume further that $(-K_V)^2 =8$. 
Then $V$ contains an $\bA ^1_k$-cylinder. 
\end{lem}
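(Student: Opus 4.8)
The plan is to produce an $\bA ^1_k$-cylinder explicitly as the complement of a section of $\pi$ over a suitable affine open of the base. First I would describe $V_{\kc}$: since $V$ is geometrically rational we have $q(V_{\kc})=0$, so $(-K_V)^2=8=8\bigl(1-q(V_{\kc})\bigr)$, and by Lemma \ref{MCB} (1) the morphism $\pi$ is a $\bP ^1$-bundle. As $B_{\kc}\cong\bP ^1_{\kc}$, it follows that $V_{\kc}\cong\bF _m$ for some $m\ge 0$, and since $-K_V$ is not ample $V$ is not a del Pezzo surface, so in fact $m\ge 2$.

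The second step is to obtain a section of $\pi$ defined over $k$. On $\bF _m$ with $m\ge 2$, the negative section $C_0$ (with $(C_0)^2=-m$) is the unique irreducible curve of negative self-intersection, which one checks at once in $\Pic (\bF _m)=\bZ [C_0]\oplus\bZ [F]$. Hence $C_0$ is stable under $\Gal$ and descends to a curve $C$ on $V$ defined over $k$ with $C_{\kc}=C_0$. Since $\pi _{\kc}$ is the (unique) ruling of $\bF _m$, we have $(C_0\cdot F)=1$, so the projective morphism $\pi |_C\colon C\to B$ has degree one; as $B$ is smooth, $C$ is a section of $\pi$ over $k$.

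The final step extracts the cylinder. A $\bP ^1$-bundle admitting a section defined over $k$ is Zariski-locally trivial over its base, so there is a non-empty affine open $B_0\subseteq B$ and an isomorphism $\pi ^{-1}(B_0)\cong\bP ^1_k\times B_0$ over $B_0$. Under it, $C\cap\pi ^{-1}(B_0)$ is the graph of a morphism $s_0\colon B_0\to\bP ^1_k$; shrinking $B_0$ (and swapping the two standard coordinates on $\bP ^1_k$ if necessary) we may assume $s_0(B_0)\subseteq\bA ^1_k\subset\bP ^1_k$, i.e.\ $s_0\in\sO (B_0)$, and then the $B_0$-automorphism $(x,b)\mapsto(x-s_0(b),b)$ of $\bP ^1_k\times B_0$ carries $C$ onto the zero section $\{0\}\times B_0$. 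Therefore
\[
\pi ^{-1}(B_0)\setminus C\;\cong\;\bigl(\bP ^1_k\setminus\{0\}\bigr)\times B_0\;\cong\;\bA ^1_k\times B_0
\]
is an open subset of $V$ isomorphic to an $\bA ^1_k$-cylinder on $V$.

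The argument is mostly bookkeeping; the point requiring care is that $B$ need not carry a $k$-rational point, so one cannot simply delete a single fibre of $\pi$ from $V\setminus C$ — one must instead pass to a Zariski-open $B_0\subseteq B$ over which the $\bP ^1$-bundle trivializes. Accordingly, the real content is the production of a section of $\pi$ defined over $k$ (via the uniqueness of the negative section on $\bF _m$, $m\ge 2$) together with the Zariski-local triviality of $\pi$; I expect this to be the main, though not very deep, obstacle.
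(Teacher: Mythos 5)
Your argument is correct, and its first two steps coincide with the paper's: identify $V_{\kc}\simeq\bF_m$ with $m\ge 2$, observe that the negative section is Galois-stable and hence descends to a section of $\pi$ defined over $k$. The only divergence is in how the cylinder is then extracted. The paper removes the section together with the pull-back of a $\Gal$-orbit on $B_{\kc}$, obtaining an affine surface fibred in affine lines over an open subset of $B$, and invokes Kambayashi--Miyanishi (\cite[Theorem 1]{KM78}) to conclude this $\bA^1$-fibration is a Zariski-locally trivial $\bA^1$-bundle, hence trivial over some smaller open subset. You instead use the section to trivialize the $\bP^1$-bundle itself over a non-empty open $B_0\subseteq B$ and then delete the (normalized) section, which avoids the appeal to \cite{KM78} and is arguably more self-contained; the one assertion you leave implicit, that a $\bP^1$-bundle with a section over $k$ is Zariski-locally trivial, deserves a line of justification (e.g.\ $V\simeq\bP(\pi_{\ast}\sO_V(C))$ over $B$ with $\pi_{\ast}\sO_V(C)$ locally free of rank two, or: the generic fibre is a smooth conic over $k(B)$ with a rational point, hence isomorphic to $\bP^1_{k(B)}$, and this isomorphism spreads out over a non-empty open subset of $B$). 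The rest of your bookkeeping (moving the section to the zero section and removing it) is fine, and your closing remark about not needing a $k$-rational point on $B$ is exactly the reason the paper, too, deletes the pull-back of a whole $\Gal$-orbit rather than a single fibre.
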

\begin{proof}
By the assumption, we notice $V_{\kc} \simeq \bF _m$ for some $m \ge 2$. 
Hence, $V_{\kc}$ contains the minimal section $M$, which is clearly defined over $k$. 
Since $V$ is endowed with only one structure of Mori conic bundle $\pi :V \to B$ over $k$, thus $\pi$ admits the section $M$ defined over $k$. 
As $\pi$ itself is defined over $k$, the base curve $B_{\kc}$ is also equipped with an action of $\Gal$ induced from that on $V_{\kc}$. 
The complement, say $U'$, of a divisor composed of $C$ and the pull-back by $\pi_{\kc}$ of a $\Gal$-orbit on $B_{\kc}$ is then a smooth affine surface defined over $k$. 
The restriction $\varphi := \pi|_{U'}$ of $\pi$ to $U'$ yields a morphism over an affine curve $Z' \subseteq B$. 
By construction, the base extension $\varphi _{\kc}$ is an $\bA^1$-bundle to conclude that so is $\varphi$ by {\cite[Theorem 1]{KM78}}, which implies that there exists an open subset $Z \subseteq Z'$ such that $\varphi^{-1}(Z) \simeq \bA ^1_k \times Z$. 
This completes the proof. 
\end{proof}
\begin{lem}\label{lem(4-2)}
With the notation as above, assume further that $(-K_S)^2 =8$. 
Then $V$ contains the affine plane $\bA ^2_k$ if and only if $V(k) \not= \emptyset$. 
\end{lem}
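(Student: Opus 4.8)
The plan is to treat the two implications separately. The ``only if'' part is immediate: $\bA ^2_k$ has a $k$-rational point, so $V(k) \not= \emptyset$. Thus I would assume $V(k) \not= \emptyset$ and exhibit a copy of $\bA ^2_k$ in $V$. First, as in the proof of Lemma \ref{lem(4-1)}, one has $V_{\kc} \simeq \bF _m$ for some $m \ge 2$; in particular $q(V_{\kc}) = 0$, so $\pi$ is a $\bP ^1$-bundle by Lemma \ref{MCB} (1) and $B_{\kc} \simeq \bP ^1_{\kc}$. Picking $x \in V(k)$ gives $\pi (x) \in B(k)$, whence $B \simeq \bP ^1_k$ by Lemma \ref{BS}. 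Moreover, the minimal section of $\bF _m$ is the unique irreducible curve of negative self-intersection on $V_{\kc}$, hence $\Gal$-invariant, so it descends to a section $M \subset V$ of $\pi$ defined over $k$.

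The heart of the argument would be the following construction. Fix a $k$-rational point $p \in B(k)$, set $F := \pi ^{-1}(p)$, and consider $U := V \setminus (M \cup F)$ together with $\varphi := \pi|_U : U \to B \setminus \{ p\} \simeq \bA ^1_k$. Since $M$ is a section of $\pi$, the base change $\varphi _{\kc}$ has all fibres isomorphic to $\bA ^1_{\kc}$ (each being $\bP ^1_{\kc}$ with one point removed), so $\varphi _{\kc}$ is an $\bA ^1$-bundle. By {\cite[Theorem 1]{KM78}}, invoked exactly as in the proof of Lemma \ref{lem(4-1)}, $\varphi$ is then an $\bA ^1$-bundle over $\bA ^1_k$. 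Since $\Pic (\bA ^1_k) = 0$ and $H^1(\bA ^1_k, \sO ) = 0$, every $\bA ^1$-bundle over $\bA ^1_k$ is trivial; hence $U \simeq \bA ^1_k \times \bA ^1_k = \bA ^2_k$, which finishes the ``if'' part.

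I expect the step needing the most care --- and the real point of the lemma --- to be the upgrade of $\varphi$ from ``an $\bA ^1$-bundle over some dense open of $B$'' (all that copying Lemma \ref{lem(4-1)} verbatim would yield) to a \emph{globally trivial} $\bA ^1$-bundle. This works precisely because the hypothesis $V(k) \not= \emptyset$ forces $B \simeq \bP ^1_k$, so that after deleting the single fibre over a $k$-rational point the base of $\varphi$ is exactly $\bA ^1_k$, whose Picard group and first cohomology of the structure sheaf both vanish. The only other point to check carefully is that $\varphi _{\kc}$ is genuinely a Zariski-locally trivial $\bA ^1$-bundle, so that {\cite{KM78}} applies --- this is routine, as it is the complement of a section in a $\bP ^1$-bundle over a curve.
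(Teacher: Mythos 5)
Your proposal is correct and follows essentially the same route as the paper: a $k$-rational point forces $B \simeq \bP^1_k$, and then $\bA^2_k$ is exhibited inside $V$ (the paper leaves this last step terse, while you spell it out via the minimal section, the complement construction of Lemma \ref{lem(4-1)}, {\cite[Theorem 1]{KM78}}, and triviality of $\bA^1$-bundles over $\bA^1_k$). The extra details you supply are accurate and consistent with what the paper implicitly relies on.
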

\begin{proof}
Assume that $S$ admits a $k$-rational point. 
Let $\pi :S \to B$ a Mori conic bundle. 
Then the base $B$ is a geometrically rational curve admitting a $k$-rational point to conclude that $B$ is isomorphic to $\bP _k^1$. 
Thus, $S$ contains the affine plane $\bA _k^2$. 
The converse direction is obvious. 
\end{proof}
In what follows, we shall consider the case $(-K_V)^2\not= 8$. 
By Lemma \ref{degree}, we then notice $(-K_V)^2 \le 4$. 
In order to prove that $V$ contains no $\bA ^1_k$-cylinder, we need the following claim: 
\begin{claim}\label{claim}
With the notation as above, assume further that $(-K_V)^2 \le 4$ and $V$ contains an $\bA ^1_k$-cylinder, say $U \simeq \bA ^1_k \times Z$, where $Z$ is a smooth affine curve defined over $k$. 
Notice that the closures in $V$ of fibers of the projection $pr_Z : U \to \bA ^1_k \times Z$ yields a linear system, say $\sL$, on $V$. 
Then $\Bs (\sL )$ consists of exactly one $k$-rational point. 
\end{claim}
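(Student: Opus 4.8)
The plan is to analyze the base locus of the pencil $\sL$ obtained from the $\bA^1_k$-cylinder $U\simeq \bA^1_k\times Z$ and show it degenerates to a single $k$-rational point. First I would recall that since $U$ is open in $V$ and $V$ is projective, the complement $D:=V\setminus U$ is a nonempty effective divisor; writing $U=\bA^1_k\times Z$ with $Z$ a smooth affine curve, the closures of the fibers $\bA^1_k\times\{z\}$ sweep out $U$ and their closures in $V$ form a linear pencil $\sL$ whose general member $L$ satisfies $L\setminus\{?\}\simeq\bA^1_k$ (the points of $L$ removed lie in $D$). The key structural input is that a general member $L$ is a rational curve meeting $D$ in a controlled way; in particular $\Bs(\sL)\subseteq D$, so the base points are among the (finitely many) points of $D$.

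Next I would argue that $\Bs(\sL)$ is nonempty, i.e.\ $\sL$ is not base-point free. If $\sL$ were free it would define a $\bP^1$-fibration $V\to\bP^1_k$ (or a morphism to a curve) with general fiber $L\simeq\bP^1$, hence $V$ would carry a second conic-bundle-like structure; combined with the Mori conic bundle $\pi:V\to\bP^1_k$ already present and $\rho_k(V)=2$, this forces $(-K_V)^2=8$ by Lemma \ref{MCB} (a $\bP^1$-bundle over $\bP^1_k$ has $(-K_V)^2=8$), contradicting $(-K_V)^2\le 4$. So $\Bs(\sL)\neq\emptyset$. The more delicate point is to show $\Bs(\sL)$ is a single closed point and that this point is $k$-rational of degree one (scheme-theoretically one point with reduced structure). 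Here I would use that the affine surface $U$ is covered by the affine lines $L\setminus\{p\}$, that two general members of $\sL$ meet only inside $D$ (since their intersection with $U$ is a union of disjoint affine lines), and a self-intersection count: $(\sL)^2=\sum_{x\in\Bs(\sL)} i(L_1,L_2;x)$ is a positive number, and one must show the sum has a single term. To pin down that the unique base point is $k$-rational, I would invoke that if $\Bs(\sL)$ were a $\Gal$-orbit of length $\ge 2$ or a non-$k$-rational point, then blowing it up would separate the general members incompatibly with the fact that after resolving $\sL$ one obtains a $\bP^1$-fibration whose general fiber is the proper transform of $L$ and which restricts to $pr_{\bA^1}:U\to\bA^1_k$ — the latter shows the resolved pencil has a single section-type exceptional curve over a $k$-point, as in the argument of Lemma \ref{Corti}.

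The main obstacle I expect is precisely the step showing $\Bs(\sL)$ is \emph{exactly one} $k$-rational point rather than possibly several points or a higher-degree point: one has to rule out that $D=V\setminus U$ contains several components through which different general members of $\sL$ pass, and that the cylinder structure is compatible with a base locus spread over a Galois orbit. I would handle this by a careful bookkeeping of $D$: since $U\simeq\bA^1_k\times Z$ and the projection to $\bA^1_k$ extends (after the minimal resolution $\psi:\bar V\to V$ of $\sL$) to a $\bP^1$-fibration $\bar\varphi:\bar V\to \bP^1_k$, the fiber of $\bar\varphi$ over $\infty\in\bP^1_k$ together with the horizontal part of $\psi^{-1}_*D$ accounts for all of $D$; tracing the components of this degenerate configuration and using that $U$ is a cylinder (so $D$ restricted over the affine base $Z$ is "vertical") forces the base locus to be concentrated at one point, and $k$-rationality then follows because the exceptional curve $\bar E_n$ in the notation of Lemma \ref{Corti} is a section of $\bar\varphi$ and hence defined over $k$, so its image $p\in V$ is $k$-rational. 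I would then conclude the claim.
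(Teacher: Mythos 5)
There is a genuine gap in your key step, the nonemptiness of $\Bs(\sL)$. You claim that if $\sL$ were base point free, then the resulting fibration together with the Mori conic bundle $\pi$ and $\rho_k(V)=2$ "forces $(-K_V)^2=8$ by Lemma \ref{MCB} (a $\bP^1$-bundle over $\bP^1_k$ has $(-K_V)^2=8$)". Nothing in your argument shows that $\Phi_{\sL}$ is a $\bP^1$-bundle (if $(-K_V)^2\le 4$ it would have $8-(-K_V)^2\ge 4$ singular fibers by Lemma \ref{MCB}(1)), and the mere coexistence of two Mori conic bundle structures on a rank-two surface does not force degree $8$: writing $-K_V$ in terms of the two fiber classes $F_1,F_2$ with $m=(F_1\cdot F_2)$ gives $(-K_V)^2=8/m$, so degrees $4,2,1$ are a priori compatible with two conic bundle structures, and your contradiction evaporates. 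The missing idea, which is exactly how the paper argues, is a \emph{section defined over $k$}: if $\Bs(\sL)=\emptyset$, then $\Phi_{\sL}:V\to\overline{Z}$ is a Mori conic bundle whose geometric general member is the closure of $\bA^1\times\{z\}$, obtained by adding a single point at infinity; these points sweep out the unique (hence Galois-stable) horizontal component of $V\setminus U$, which is a section of $\Phi_{\sL}$ defined over $k$. Since $V$ is minimal, Lemma \ref{MCB}(2) then forces $\Phi_{\sL}$ to be a $\bP^1$-bundle, i.e.\ $(-K_V)^2=8$, contradicting $(-K_V)^2\le 4$. Without producing this section, no contradiction is available.

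For the second half, your argument is both heavier and partly circular: "the exceptional curve $\bar E_n$ is a section of $\bar\varphi$ and hence defined over $k$" is not a valid implication (sections need not be defined over $k$, and the $k$-rationality of the centre of the blow-up is precisely what is being proved). The intended argument is much shorter: two general members of $\sL$ restrict to disjoint affine lines in $U$, so $\Bs(\sL)\subseteq L\setminus U\subseteq L\setminus(\bA^1\times\{z\})$, which is a single geometric point (the unique place at infinity of $\bA^1$); since $\sL$, and hence $\Bs(\sL)$, is defined over $k$ and $k$ is perfect, this single Galois-stable point is $k$-rational. Your observation that general members meet only inside $D$ is the right starting point, but the bookkeeping of $D$ and the resolution $\psi$ you propose are unnecessary for this step and do not by themselves deliver the $k$-rationality.
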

\begin{proof}
Suppose on the contrary that $\Bs (\sL) = \emptyset$. 
Then the rational map $\Phi _{\sL} :V \dashrightarrow \overline{Z}$ associated with $\sL$ is a morphism, where $\overline{Z}$ is the smooth projective model of $Z$. 
In particular, $\Phi _{\sL}$ is a Mori conic bundle. 
By construction of $\Phi _{\sL}$, we see that $\Phi _{\sL}$ admits a section defined over $k$. 
However, this is a contradiction to Lemma \ref{MCB} (2). 
Thus, we obtain $\Bs (\sL ) \not= \emptyset$. 
Moreover, by the configuration of $\sL$, we obtain this claim. 
\end{proof}
By using Claim \ref{claim}, we can show the following lemma: 
\begin{lem}\label{lem(4-3)}
With the notation as above, assume further that $(-K_V)^2 \le 4$. 
Then $V$ contains no $\bA ^1_k$-cylinder. 
\end{lem}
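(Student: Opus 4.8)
The plan is to argue by contradiction. Suppose $V$ contained an $\bA^1_k$-cylinder $U\simeq\bA^1_k\times Z$ with $Z$ a smooth affine curve, and let $\sL$ be the pencil on $V$ cut out by the closures of the fibres of $pr_Z$. By Claim \ref{claim}, $\Bs(\sL)=\{p\}$ for a single $k$-rational point $p$, so $\pi(p)$ is a $k$-point of the geometrically rational base curve $B$, whence $B\simeq\bP^1_k$ by Lemma \ref{BS}; thus $\pi$ is a Mori conic bundle over $\bP^1_k$, as needed to invoke Lemma \ref{Corti}. I would also record that a general member $L$ of $\sL$ has $L\setminus\{p\}\simeq\bA^1_k$: its trace on $U$ is a general fibre of $pr_Z$, hence isomorphic to $\bA^1_k$, and since the normalisation of $L$ is $\bP^1$ the unique point of $L$ lying outside $U$ is forced to be a base point of $\sL$, i.e. $p$.

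Next, since $\rho_k(V)=2$ and the intersection pairing on $\langle -K_V,F\rangle$ is nondegenerate, where $F$ is the closed fibre of $\pi$ through $p$, there are unique $a,b\in\bQ$ with $L\sim_\bQ a(-K_V)+bF$. Lemma \ref{Corti} now applies (its hypotheses — $\rho_k(V)=2$, $(-K_V)^2\le 4$, a Mori conic bundle over $\bP^1_k$, $\Bs(\sL)$ a single $k$-point $p$, and a general member $L$ with $L\setminus\{p\}\simeq\bA^1_k$ of the stated numerical type — are all in place), and yields $a>0$ and $b<0$.

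It remains to turn $a>0,\ b<0$ into a contradiction. My plan is to resolve the base point: let $\psi:\bar V\to V$ be the shortest composite of blow-ups over $p$ making $\bar\sL:=\psi^{-1}_{\ast}(\sL)$ base-point free, which is defined over $k$ because $\Bs(\sL)$ is a single $k$-point and $L$ is unibranch at $p$; then $\Phi_{\bar\sL}:\bar V\to\bP^1_k$ is a $\bP^1$-fibration (its general member being the smooth rational curve coming from $L$) admitting the last exceptional curve $\bar E_n$ as a section defined over $k$. Feeding $(\bar\sL\cdot K_{\bar V})=-2$ together with the identities $(\sL)^2=\sum_j m_j^2$ and $(\sL\cdot(-K_V))=2+\sum_j m_j$ (for the multiplicities $m_j$ of the successive base points) into the inequality $0<(\sL)^2=a\bigl((-K_V)^2a+4b\bigr)$ supplied by $b<0$, one checks, using $(-K_V)^2\le 4$, that no admissible multiplicity sequence exists. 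Alternatively — and this is how I would actually finish — note that $U$ is a dense open subscheme of both $V$ and the smooth projective surface $\bP^1_k\times\bar Z$ (for $\bar Z$ the smooth projective model of $Z$, a $k$-curve of genus $0$, using $\bA^1_k\times Z\subseteq\bP^1_k\times\bar Z$), so $V$ is $k$-birational to $\bP^1_k\times\bar Z$; but $\bP^1_k\times\bar Z$ is $k$-minimal, lies in $\fC$, and has $(-K)^2=8$, which contradicts Lemma \ref{lem(3-4)}(3) since $V\in\fC$ is $k$-minimal with $(-K_V)^2\le 4<8$.

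The main obstacle is precisely this last step. The route via $\bP^1_k\times\bar Z$ and Lemma \ref{lem(3-4)}(3) disposes of it in one line, but a proof that keeps Section \ref{4} independent of the elementary-link classification of Section \ref{3} must instead push the resolution of $\Bs(\sL)$ through by hand — checking that it runs over $k$, that $\bar E_n$ is a genuine section over $k$, and that the resulting numerical constraints (from adjunction and the proximity relations of the $m_j$) are incompatible with $a>0,\ b<0$ and $(-K_V)^2\le 4$ — and this bookkeeping is the technical core of the lemma.
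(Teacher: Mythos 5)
Your proposal does reach a correct proof, but the step that actually closes the argument is different from the paper's. The paper, after obtaining $a>0$ and $b<0$ from Lemma \ref{Corti}, finishes in two lines of intersection theory: if $(-K_V)^2\le 0$ then $0<(\sL)^2=a^2(-K_V)^2+4ab<0$, absurd; and if $0<(-K_V)^2\le 4$ it uses the standing hypothesis that $-K_V$ is \emph{not ample} to produce a curve $C$ with $(-K_V\cdot C)\le 0$, whence $0\le(\sL\cdot C)=a(-K_V\cdot C)+b(F\cdot C)\le b(F\cdot C)$ forces $b\ge 0$, contradicting Lemma \ref{Corti}. You did not find this finish (the non-ampleness of $-K_V$ never enters your argument), and your first proposed ending --- the multiplicity bookkeeping with $(\sL)^2=\sum m_j^2$, $(\sL\cdot(-K_V))=2+\sum m_j$ --- is only a sketch; it is not clear it can be made to work by pure numerics without some geometric input playing the role of non-ampleness, and you yourself leave it as ``the technical core.'' Your second ending, however, is complete and valid: $U\simeq\bA^1_k\times Z$ embeds as a dense open set of $\bP^1_k\times\overline{Z}$, which is a smooth $k$-minimal surface in $\fC$ with $(-K)^2=8$, so a birational map $V\dashrightarrow\bP^1_k\times\overline{Z}$ with $V\in\fC$ minimal and $(-K_V)^2\le 4<8$ contradicts Lemma \ref{lem(3-4)}(3); since that lemma is proved in Section \ref{3} independently of Section \ref{4}, there is no circularity. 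Note, though, that this route makes your entire Corti/pencil analysis (and Claim \ref{claim}) superfluous --- the contradiction follows from the mere existence of the cylinder --- and it imports Iskovskikh's classification of elementary links into a lemma that the paper proves by a much lighter, self-contained argument; what your route buys is brevity at the price of heavier machinery, while the paper's route keeps Section \ref{4} independent of Section \ref{3}.
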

\begin{proof}
Suppose on the contrary that $V$ contains an $\bA ^1_k$-cylinder $U \simeq \bA ^1_k \times Z$. 
Recall that $V$ is endowed with a structure of a Mori conic bundle $\pi: V \to B$ defined over $k$, where we note $B_{\kc} \simeq \bP ^1_{\kc}$ since $V_{\kc}$ is rational. 
Letting $\sL$ be the same as in Claim \ref{claim}, then $\Bs (\sL )$ consists of exactly one $k$-rational point, say $p$. 
Since $p$ is $k$-rational, so is its image via $\pi$, in particular, $B$ is isomorphic to $\bP_k^1$ by Lemma \ref{BS}. 
Since $Z$ is contained in $\bP ^1_k$ by the similar argument, $\sL$ is a linear pencil on $S$. 
Letting $F$ be the closed fiber of $\pi _{\kc}$ passing through $p$, which is defined over $k$, we can write $\sL \sim _{\bQ} a(-K_V) + bF$ for some $a,b \in \bQ$ since $\Pic (V)_{\bQ}$ is generated by $-K_V$ and $F$. 
In what follows, we consider the two cases that $(-K_V)^2 \le 0$ or $(-K_V)^2 >0$ separately. 

In the case of $(-K_V)^2 \le 0$, by Lemma \ref{Corti} we have $0 < (\sL )^2 = a^2(-K_V)^2+4ab < 0$, which is absurd. 

In the case of $(-K_V)^2 >0$, we notice there exists a curve $C$ on $V$ with $(-K_V \cdot C) \le 0$ since $-K_V$ is not ample. 
Since $a>0$ by Lemma \ref{Corti}, we have $0 \le (\sL \cdot C) = a(-K_V \cdot C) + b(F \cdot C) \le b(F \cdot C)$. 
Hence, $b \ge 0$ by virtue of $(F \cdot C)>0$. 
This is a contradiction to Lemma \ref{Corti}. 
\end{proof}
Theorem \ref{main(1)} follows from Lemmas \ref{lem(4-1)}, \ref{lem(4-2)} and \ref{lem(4-3)}. 
\section{Proof of Theorem \ref{main(2)}}\label{5}
In this section, we prove Theorem \ref{main(2)}. 
Let $\sigma :S \to V$ be a birational morphism between smooth projective surfaces over $k$ such that $V$ is $k$-minimal. 
Notice that the ``if'' part of Theorem \ref{main(2)} is obvious. 
Hence, we shall prove the ``only if'' part of Theorem \ref{main(2)}, in other words, assume that $S$ contains an $\bA ^1_k$-cylinder in what follows. 
Then we have the following lemma: 
\begin{lem}\label{lem(5-1)}
With the notation and the assumption as above, there exists a birational map $\tau :S \dashrightarrow W$ between smooth projective surfaces over $k$ such that $W$ is endowed with a structure of $\bP ^1$-bundle admitting a section defined over $k$. 
\end{lem}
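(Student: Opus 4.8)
The plan is to start from the $\bA^1_k$-cylinder $U \simeq \bA^1_k \times Z$ inside $S$ and transport it, through birational modifications, to a surface carrying a $\bP^1$-bundle with a $k$-rational section. First I would take the closures in $S$ of the fibers $\{\ast\} \times Z$ of the projection $pr_Z$; these sweep out a linear pencil (or at worst a linear system) $\sL$ on $S$ whose general member $L$ satisfies $L \setminus \Bs(\sL) \simeq \bA^1_k$. After possibly shrinking $Z$, $\sL$ is a pencil, and resolving its base locus by a sequence of blow-ups $\psi : \widetilde{S} \to S$ (all defined over $k$, since the base scheme of $\sL$ is $\Gal$-stable) produces a $\bP^1$-fibration $\widetilde{\varphi} : \widetilde{S} \to \overline{Z}$ over the smooth projective model $\overline{Z}$ of $Z$. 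The last exceptional curve extracted over each base point is a section of this fibration, and it is defined over $k$; so $\widetilde{\varphi}$ is a $\bP^1$-fibration with a $k$-rational section.

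Next I would run a relative minimal model program for $\widetilde{\varphi}$ over $\overline{Z}$, contracting over $k$ all $(-1)$-curves in the fibers. Because a section of $\widetilde\varphi$ survives every such contraction (a section meets each fiber in one point and the contraction of a fiber component only moves the section), the output is a smooth projective surface $W$ with a Mori conic bundle structure $\pi_W : W \to \overline{Z}$ admitting a section $\Gamma$ defined over $k$. It remains to upgrade ``Mori conic bundle with a $k$-section'' to ``$\bP^1$-bundle with a $k$-section''. Here I would invoke Lemma \ref{MCB}: once $W$ is relatively minimal, the existence of a section defined over $k$ forces $(-K_W)^2 = 8(1 - q(W_{\kc}))$ — but one has to be slightly careful, since Lemma \ref{MCB}(2) is stated for \emph{$k$-minimal} $W$, whereas here $W$ is only minimal over $\overline{Z}$. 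If $W$ is not $k$-minimal, I would contract further over $k$ (this cannot destroy the section as long as we contract fiber components; and if a contraction were transversal to the fibration we would have changed the fibration, which I would rule out or else replace $W$). Alternatively — and this is cleaner — once $\pi_W$ has a $k$-section $\Gamma$, each singular fiber $F_\kc = F_1 + F_2$ meets $\Gamma$ in exactly one point, hence lies entirely on one side; contracting the component of $F$ \emph{not} meeting $\Gamma$ (this set of components is $\Gal$-stable because $\Gamma$ is defined over $k$) gives a birational morphism over $k$ to a surface still fibered over $\overline{Z}$ and still carrying a section, with strictly fewer singular fibers. Iterating eliminates all singular fibers, yielding a $\bP^1$-bundle over $\overline{Z}$ with a section defined over $k$. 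That surface is the desired $W$, and $\tau : S \dashrightarrow W$ is $\psi$ followed by the relative MMP and these contractions.

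The step I expect to be the main obstacle is the very last one: making the contraction of ``the fiber component not meeting the section $\Gamma$'' rigorous and $\Gal$-equivariant in the case where, over $\kc$, a singular fiber has two components and the section passes through their intersection point $p$ rather than through a smooth point of exactly one component. In that degenerate situation neither component is ``the one away from $\Gamma$'', and one must argue differently — e.g.\ blow up $p$ first to separate, or observe that such a configuration is incompatible with $\Gamma \setminus \Bs$ being $\bA^1_k$ and with $\Gamma$ being a section. Sorting out this case (and checking that the number of blow-ups in $\psi$ together with the fiber contractions really does terminate at a $\bP^1$-bundle, i.e.\ that no base points of $\sL$ are ``wasted'') is where the genuine work lies; the rest is assembling standard facts about relative surface MMP and Lemma \ref{MCB}.
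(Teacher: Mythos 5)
Your proposal follows essentially the same route as the paper's proof: extract the pencil $\sL$ from the cylinder, resolve its (single, $k$-rational) base point over $k$ to obtain a $\bP^1$-fibration with a section defined over $k$, contract fibre components over $k$ to reach a Mori conic bundle, and use the surviving section to exclude singular fibres (the paper simply quotes Lemma \ref{MCB} (2) at this last step instead of your explicit contraction argument). The ``main obstacle'' you flag is actually vacuous, since a section $\Gamma$ cannot pass through the node of a singular fibre $F_{\kc}=F_1+F_2$ (that would force $(\Gamma_{\kc}\cdot F_{\kc})\ge (\Gamma_{\kc}\cdot F_1)+(\Gamma_{\kc}\cdot F_2)\ge 2$, contradicting $(\Gamma_{\kc}\cdot F_{\kc})=1$); the only point left implicit in your write-up is the case $\Bs(\sL)=\emptyset$, where the section comes from the closure of a constant section of the cylinder rather than from an exceptional curve, exactly as in the paper's ``$\psi=\mathrm{id}$'' case.
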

\begin{proof}
By the assumption, we can take an $\bA ^1_k$-cylinder in $S$, say $U \simeq \bA ^1_k \times Z$, where $Z$ is a smooth affine curve defined over $k$. 
Then the closures in $S$ of fibers of the projection $pr_Z : U \to \bA ^1_k \times Z$ yields a linear system, say $\sL$, on $S$. 
Let $\Phi _{\sL} :S \dashrightarrow \overline{Z}$ be the rational map associated with $\sL$, where $\overline{Z}$ is the smooth projective model of $Z$. 
Note that either ${\rm Bs}(\sL ) = \emptyset$ or ${\rm Bs}(\sL )$ consists of exactly one $k$-rational point. 
Let $\psi : \bar{S} \to S$ be the shortest succession of blow-ups at a point on $\Bs (\sL )$ and its infinitely near points such that the proper transform $\bar{\sL} := \psi ^{-1}_{\ast}(\sL )$ is free of base points to give rise to a morphism $\bar{\varphi} := \psi \circ \Phi _{\sL}$, where we set $\psi := id$ if ${\rm Bs}(\sL ) = \emptyset$. 
Then $\bar{\varphi}$ is a $\bP ^1$-fibration. 
By contracting $\Gal$-orbits of $(-1)$-curves and subsequently $\Gal$-orbits of (smoothly) contractible curves in the union of all singular fibers of $\bar{\varphi}_{\kc}$, we obtain a birational morphism $\tau' :\bar{S} \to W$ defined over $k$ such that $W$ is endowed with a structure of Mori conic bundle $\pi :W \to \overline{Z}$ satisfying $\bar{\varphi} = \pi \circ \tau'$. 
Since $\bar{\varphi}$ admits a section $\Gamma$ defined over $k$ by construction, the direct image $\tau' _{\ast}(\Gamma)$ is a section of $\pi$ defined over $k$. 
Hence, $\pi$ is a $\bP ^1$-bundle by Lemma \ref{MCB} (2). 
Moreover, the rational map $\tau := \tau' \circ \psi ^{-1}:S \dashrightarrow W$ is birational over $k$. 
\end{proof}
\begin{rem}
In the proof of Lemma \ref{lem(5-1)}, assume that $S$ is geometrically rational and ${\rm Bs}(\sL ) \not= \emptyset$. 
Then $\overline{Z}$ is a $k$-form of $\bP ^1_{\kc}$. 
Since $\bar{S}$ contains a $k$-rational point $\bar{p}$ lying on the exceptional divisors of $\psi$ with the last exceptional one, the image $\bar{\varphi} (\bar{p})$ is a $k$-rational point on $\overline{Z}$. 
Hence, we obtain $\overline{Z} \simeq \bP ^1_k$ by Lemma \ref{BS}. 
In particular, we know that $S$ is rational over $k$. 
\end{rem}
By Lemma \ref{lem(5-1)}, we know $\kappa (S_{\kc}) = -\infty$. 
Hence, we shall consider the two cases that $S$ is geometrically rational or not separately. 
At first, we treat the case that $S$ is not geometrically rational as follows: 
\begin{lem}\label{lem(5-2)}
With the notation and assumptions as above, assume further that $S$ is not geometrically rational. 
Then $V$ contains an $\bA ^1_k$-cylinder. 
\end{lem}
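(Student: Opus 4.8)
The plan is to use Lemma \ref{lem(5-1)} to replace $S$ by a surface $W$ carrying a $\bP ^1$-bundle with a $k$-section, to note that $V$, $S$ and $W$ are all geometrically ruled and irrational, to exploit the uniqueness of the ruling on such surfaces so that the induced birational map $V \dashrightarrow W$ becomes fibrewise, and finally to transport the $k$-section down to the generic fibre of the Mori conic bundle on $V$.

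Concretely: since $S$ contains an $\bA ^1_k$-cylinder, Lemma \ref{lem(5-1)} gives a birational map $\tau :S \dashrightarrow W$ over $k$ with $W$ admitting a $\bP ^1$-bundle structure $\rho :W \to \overline{Z}$ that has a section defined over $k$; composing with $\sigma ^{-1}$ yields a birational map $\chi := \tau \circ \sigma ^{-1}:V \dashrightarrow W$ over $k$. By Lemma \ref{lem(5-1)} we have $\kappa (S_{\kc}) = -\infty$, so $S_{\kc}$, not being rational, is ruled and irrational; hence the birationally equivalent $V_{\kc}$ and $W_{\kc}$ are ruled and irrational as well. In particular $q(W_{\kc}) = q(S_{\kc}) \ge 1$, whence $g(\overline{Z}_{\kc}) \ge 1$. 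On the other hand $V$ is $k$-minimal and $V_{\kc}$ is ruled but not rational, so $V$ is not a del Pezzo surface; by Lemma \ref{minimal} it therefore carries a Mori conic bundle structure $\pi :V \to B$ over $k$, with $g(B_{\kc}) = q(V_{\kc}) \ge 1$.

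Next I would check that $\chi$ is compatible with $\pi$ and $\rho$. Resolve $\chi$ by a smooth projective surface $T$ with birational morphisms $p:T \to V$ and $p':T \to W$; then $\pi \circ p$ and $\rho \circ p'$ are two $\bP ^1$-fibrations on $T$. A general fibre of either one is a smooth rational curve of self-intersection $0$; since it cannot dominate a curve of genus $\ge 1$, it is contracted by the other fibration, and being of self-intersection $0$ it must be a whole fibre of it. So the two fibrations coincide over $\kc$ up to an isomorphism of the base curves, which is $\Gal$-equivariant because the fibres are $\Gal$-stable; this isomorphism descends to a $k$-isomorphism $B \simeq \overline{Z}$ through which $\chi$ becomes a birational map over $B$. (Equivalently, $\pi$ and $\rho$ are both the Albanese fibration, and the Albanese is a birational invariant.) Passing to the generic fibres over the point $\eta$ of $B$ with residue field $k(B)$, the curves $V_{\eta}$ and $W_{\eta}$ are smooth projective geometrically rational curves over $k(B)$ which are birational over $k(B)$, hence isomorphic, each being the unique smooth proper model of its function field. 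The section of $\rho$ provides a $k(B)$-rational point of $W_{\eta}$, so $W_{\eta} \simeq \bP ^1_{k(B)}$, and therefore $V_{\eta} \simeq \bP ^1_{k(B)}$ as well; in particular $V_{\eta}$ contains $\bA ^1_{k(B)}$, so by Lemma \ref{vertical} the Mori conic bundle $\pi$ admits a vertical $\bA ^1_k$-cylinder, i.e.\ $V$ contains an $\bA ^1_k$-cylinder. (One also sees that the rational section $B \dashrightarrow V$ of $\pi$ determined by this $k(B)$-point extends to a genuine section over the smooth curve $B$.)

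The step I expect to be the crux is the compatibility of $\chi$ with the two conic-bundle structures, i.e.\ the fact that a birational map between geometrically ruled irrational surfaces necessarily respects the rulings. This is classical, but it is the one place where the hypothesis that $S$ is not geometrically rational is genuinely used; once it is in place, the remainder is a formal manipulation with generic fibres.
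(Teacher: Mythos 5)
Your proposal is correct and follows essentially the same route as the paper: both reduce via Lemma \ref{lem(5-1)} to a surface $W$ carrying a $\bP ^1$-bundle with a $k$-section, and both use irrationality of the base (L\"{u}roth) to see that the induced birational map $V \dashrightarrow W$ must respect the conic-bundle structures. The only divergence is in the finish: the paper pushes the section forward to a section of the unique Mori conic bundle on $V$ via the projection formula and then repeats the explicit cylinder construction of Lemma \ref{lem(4-1)}, whereas you identify the base curves, pass to the generic fibre, and invoke Lemma \ref{vertical}; both conclusions are valid.
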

\begin{proof}
By Lemma \ref{lem(5-1)}, there exists a birational map $\tau :S \dashrightarrow W$ between smooth projective surfaces over $k$ such that $W$ has a $\bP ^1$-bundle structure $\pi :W \to C$ admitting a section defined over $k$. 
Hence, we obtain a birational map $\chi := \tau \circ \sigma ^{-1}:V \dashrightarrow W$. 
Let $\Gamma$ be a section of $\pi$ defined over $k$. 
Since $V$ is $k$-minimal and is not geometrically rational, $V$ is endowed with only one structure of Mori conic bundle $p :V \to B$. 
In order to show that $\chi _{\ast}(\Gamma )$ is also a section of $p$, we shall take a general fiber of $F$ of $p_{\kc}$. 
Then notice that $\chi ^{-1}_{\ast ,\kc}(F)$ is a fiber of $\pi _{\kc}$ by L\"{u}roth Theorem and $q(V_{\kc})>0$.  
Hence, we have $(\chi _{\ast}(\Gamma )_{\kc} \cdot F) = (\Gamma _{\kc} \cdot \chi ^{\ast}_{\kc}(F)) = (\Gamma _{\kc} \cdot \chi ^{-1}_{\ast ,\kc}(F))=1$ by the projection formula. 
This implies that $\chi _{\ast}(\Gamma )$ is a section of $p$. 
Thus, $V$ contains a vertical $\bA ^1_k$-cylinder with respect to $p$ by the similar argument to Lemma \ref{lem(4-1)}. 
\end{proof}
\begin{rem}
Let $V'$ be a smooth minimal projective surface, which is not geometrically rational, defined over $k$ containing an $\bA ^1_k$-cylinder. 
By Lemma \ref{lem(5-2)}, $V'$ is then endowed with a structure of $\bP ^1$-bundle admitting a section defined over $k$. 
Hence, by Lemma \ref{MCB} (2) we know $(-K_{V'})^2=8(1-q(V_{\kc}'))$. 
\end{rem}
In what follows, we shall treat the case that $S$ is geometrically rational as follows: 
\begin{lem}
With the notation and assumptions as above, assume further that $S$ is geometrically rational. 
Then $V$ contains an $\bA ^1_k$-cylinder. 
\end{lem}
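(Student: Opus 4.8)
The plan is to argue by contradiction, pushing any obstruction to the existence of an $\bA^1_k$-cylinder on $V$ through a $k$-minimal model of the $\bP^1$-bundle furnished by Lemma \ref{lem(5-1)}, and then reading off a contradiction from the classification of elementary links recalled in \S\ref{3}.

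Since $\sigma$ is birational, $V$ is geometrically rational, and being $k$-minimal it lies in $\fD \cup \fC$ by Lemma \ref{minimal}. Suppose $V$ contains no $\bA^1_k$-cylinder. Then, by Theorem \ref{DK}, Theorem \ref{main(1)} and Lemma \ref{degree}, $V$ must fall into one of the following (jointly exhaustive) situations: (a) $V \in \fD$ with $(-K_V)^2 \le 4$; (b) $V \in \fD$ with $V(k) = \emptyset$; (c) $V \in \fC$ with $(-K_V)^2 \le 4$; (d) $V \in \fC$ with $(-K_V)^2 = 8$, $V$ a $k$-form of $\bP^1_{\kc} \times \bP^1_{\kc}$, having no Mori conic bundle structure with a section defined over $k$.

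Next I would invoke Lemma \ref{lem(5-1)} to obtain a birational map $\tau : S \dashrightarrow W$ with $W$ carrying a $\bP^1$-bundle $\pi : W \to \overline{Z}$ admitting a section $\Gamma$ defined over $k$; set $\chi := \tau \circ \sigma^{-1} : V \dashrightarrow W$, which is birational, and note $W$ is geometrically rational, so $W_{\kc} \simeq \bF_m$ for some $m \ge 0$. The key step is to pin down a $k$-minimal model $\mu : W \to W_0$. If $m \ne 1$ then $W_{\kc}$ carries no $(-1)$-curve, so $W = W_0$ is already $k$-minimal; the fibre class and $[\Gamma]$ form a $\Gal$-invariant $\bQ$-basis of $\Pic(W_{\kc})_{\bQ}$, hence $\rho_k(W) = 2$ and $W \in \fC$ by Lemma \ref{minimal}, and since $\pi$ has a $k$-section, Lemma \ref{MCB} forces $\pi$ to be a $\bP^1$-bundle with $(-K_W)^2 = 8(1 - q(W_{\kc})) = 8$. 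If $m = 1$ then the unique $(-1)$-curve of $W_{\kc}$ is $\Gal$-stable, hence defined over $k$; contracting it presents $W_0$ as a $k$-form of $\bP^2$ containing the $k$-rational point to which that curve collapses, so $W_0 \simeq \bP^2_k$ by Lemma \ref{BS}, with $(-K_{W_0})^2 = 9$. In all cases $W_0$ is $k$-minimal and geometrically rational, and is either $\bP^2_k$ or a member of $\fC$ with $(-K_{W_0})^2 = 8$ carrying a Mori conic bundle admitting a section over $k$.

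Finally, $\chi_0 := \mu \circ \chi : V \dashrightarrow W_0$ is a birational map between $k$-minimal surfaces, so Lemma \ref{lem(3-4)} applies to it in each situation. In (a), part (1) forces $(-K_{W_0})^2 \le (-K_V)^2 \le 4$, contradicting $(-K_{W_0})^2 \in \{8,9\}$. In (b), part (2) forces $W_0 \in \fD$ with $W_0(k) = \emptyset$; but the only possibility for $W_0$ lying in $\fD$ is $\bP^2_k$, which has $k$-rational points. In (c), part (3) forces $W_0 \in \fC$ with $(-K_{W_0})^2 = (-K_V)^2 \le 4$, while every $\fC$-possibility for $W_0$ has $(-K_{W_0})^2 = 8$. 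In (d), part (4) forces $W_0$ to be a $k$-form of $\bP^1_{\kc} \times \bP^1_{\kc}$ with no Mori conic bundle admitting a $k$-section; but then $W_0 \ne \bP^2_k$, hence $W_0 \in \fC$ and, by the previous paragraph, it does carry a Mori conic bundle with a $k$-section. Every situation is thus impossible, so $V$ contains an $\bA^1_k$-cylinder. The delicate point is precisely the structural analysis of $W$ and $W_0$ in the third paragraph: the whole argument hinges on $W_0$ being forced to be $\bP^2_k$ or a member of $\fC$ with $(-K_{W_0})^2 = 8$ and a $k$-section, and never a del Pezzo surface of smaller anticanonical degree or one lacking a rational point, since that dichotomy is exactly what each branch of Lemma \ref{lem(3-4)} contradicts.
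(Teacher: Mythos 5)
Your proof is correct and takes essentially the same route as the paper: assume $V$ has no $\bA^1_k$-cylinder, enumerate the possible obstructions via Theorems \ref{DK} and \ref{main(1)} (together with Lemma \ref{degree}), use Lemma \ref{lem(5-1)} to produce a birational model carrying a $\bP^1$-bundle with a section over $k$, and derive a contradiction from Lemma \ref{lem(3-4)}. The only deviation is your extra, slightly more careful step of replacing $W$ by a $k$-minimal model $W_0$ (contracting the $(-1)$-curve to get $\bP^2_k$ when $W_{\kc}\simeq\bF_1$) so that Lemma \ref{lem(3-4)} applies verbatim to $V\dashrightarrow W_0$, whereas the paper applies Lemma \ref{lem(3-4)} directly to $\chi\colon V\dashrightarrow W$ after noting $W\in\fC$ and $(-K_W)^2=8$ via Lemma \ref{MCB} (1).
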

\begin{proof}
Suppose on the contrary that $V$ does not contain any $\bA ^1_k$-cylinder. 
By Theorems \ref{DK} and \ref{main(1)}, we then know that $V$ satisfies one of the following properties: 
\begin{itemize}
\item $V$ is of Picard rank one with $(-K_V)^2 \le 4$. 
\item $V$ is of Picard rank one with $V(k) = \emptyset$. 
\item $V$ is of Picard rank two and has no Mori conic bundle structure with a section defined over $k$. 
\end{itemize}
On the other hand, by Lemma \ref{lem(5-1)} we have a birational map $\chi :V \dashrightarrow W$ such that $W$ is endowed with a structure of $\bP ^1$-bundle admitting a section defined over $k$, in particular, $W \in \fC$ and $(-K_W)^2=8$ by Lemma \ref{MCB} (1). 
However, it is a contradiction to Lemma \ref{lem(3-4)}. 
\end{proof}
Therefore, this completes the proof of the ``only if'' part of Theorem \ref{main(2)}. 
Namely, we obtain Theorem \ref{main(2)}. 
\section{Remarks on Corollary \ref{cor}}\label{6}
\subsection{Images of cylinders via birational maps}
\subsubsection{}
Let $\theta :X_1 \dashrightarrow X_2$ be a birational map between smooth projective varieties such that $X_2$ contains an $\bA ^1_k$-cylinder. 
Notice that $X_1$ obviously contains an $\bA ^1_k$-cylinder provided that $\theta$ is a morphism. 
However, it is subtle in general if $\theta$ is not a morphism. 
As an example, let $X$ be a smooth Fano threefold of Picard rank one, of Fano index one and of genus seven. 
It is known that $X$ is rational, hence, we obtain a birational map $\theta :X \dashrightarrow \bP ^3_k$. 
However, it is expected that $X$ does not contain any cylinder (see {\cite[Conjecture 3.13]{CPPZ21}}). 
Meanwhile, Corollary \ref{cor} implies that $X_1$ always contains an $\bA ^1_k$-cylinder provided that $X_1$ and $X_2$ are two-dimensional. 
\subsubsection{}
By using Lemma \ref{vertical} and Corollary \ref{cor}, we obtain the following corollary: 
\begin{cor}\label{cor(2)}
Let $f_1: X_1 \to Y_1$, $f_2:X_2 \to Y_2$ and $g:Y_1 \to Y_2$ be projective dominant morphisms over a field $K$ of characteristic zero such that general fibers of $g \circ f_1$ and $g$ are smooth surfaces, and assume that there exists a birational map $\theta :X_1 \dashrightarrow X_2$ such that $g \circ f_1 = f_2 \circ \theta$ (see the following commutative diagram): 
\begin{align*}
\xymatrix@C=40pt{
X_1 \ar@{-->}[rr]^-{\theta} \ar[d]_-{f_1} & & X_2 \ar[d]^-{f_2}\\
Y_1 \ar[rr]_{g} & & Y_2 
}
\end{align*}
If $f_1$ admits a vertical $\bA ^1_K$-cylinder, then so does $f_2$. 
\end{cor}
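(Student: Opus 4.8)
The plan is to reduce Corollary \ref{cor(2)} to Corollary \ref{cor} by passing to generic fibers and applying Lemma \ref{vertical}. First I would set $\eta_2$ to be the generic point of $Y_2$, so that the function field $K(Y_2)$ is the base field over which the generic fibers of $g \circ f_1$ and $f_2$ live. Write $X_{1,\eta_2} := (g \circ f_1)^{-1}(\eta_2)$ and $X_{2,\eta_2} := f_2^{-1}(\eta_2)$; both are smooth projective surfaces over $K(Y_2)$ (using the characteristic-zero hypothesis and that general fibers of $g \circ f_1$ and $g$ are smooth surfaces, hence the generic fibers are smooth surfaces over $K(Y_2)$). The commutativity $g \circ f_1 = f_2 \circ \theta$ guarantees that $\theta$ restricts to a birational map $\theta_{\eta_2} : X_{1,\eta_2} \dashrightarrow X_{2,\eta_2}$ of $K(Y_2)$-varieties.

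Next I would apply Lemma \ref{vertical} twice. Since $f_1$ admits a vertical $\bA^1_K$-cylinder and $g \circ f_1 = f_2 \circ \theta$, I need the vertical cylinder with respect to $f_1$ to produce a cylinder in the generic fiber $X_{1,\eta_2}$ of $g \circ f_1$. Here one should note that $f_1$ having a vertical $\bA^1_K$-cylinder (relative to $f_1 : X_1 \to Y_1$) is the same, by Lemma \ref{vertical}, as the generic fiber $X_{1,\eta_1}$ over $K(Y_1)$ containing an $\bA^1_{K(Y_1)}$-cylinder, where $\eta_1$ is the generic point of $Y_1$. Then $X_{1,\eta_2}$ is obtained from $X_{1,\eta_1}$ by a further extension of the base field (from $K(Y_1)$ down to $K(Y_2)$, since $Y_1$ dominates $Y_2$ via $g$, so $K(Y_2) \subseteq K(Y_1)$ and $X_{1,\eta_1}$ is the generic fiber of $X_{1,\eta_2} \to \Spec K(Y_1)$ — equivalently $g \circ f_1$ factors as $f_1$ followed by $g$). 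Concretely, $X_{1,\eta_2}$ fibers over $\Spec K(Y_1) \times_{\Spec K(Y_2)} \eta_2 = \Spec K(Y_1)$ with generic fiber $X_{1,\eta_1}$, so an $\bA^1_{K(Y_1)}$-cylinder in $X_{1,\eta_1}$ spreads out (again via Lemma \ref{vertical}, applied to the map $X_{1,\eta_2} \to \Spec K(Y_1)$) to a vertical $\bA^1_{K(Y_2)}$-cylinder, hence in particular an $\bA^1_{K(Y_2)}$-cylinder, in $X_{1,\eta_2}$.

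Now I would invoke Corollary \ref{cor}: $\theta_{\eta_2} : X_{1,\eta_2} \dashrightarrow X_{2,\eta_2}$ is a birational map between smooth projective surfaces over the perfect field $K(Y_2)$ (any field of characteristic zero is perfect), and $X_{1,\eta_2}$ contains an $\bA^1_{K(Y_2)}$-cylinder, so $X_{2,\eta_2}$ contains an $\bA^1_{K(Y_2)}$-cylinder as well. Finally, applying Lemma \ref{vertical} in the other direction to $f_2 : X_2 \to Y_2$: the generic fiber $X_{2,\eta_2}$ over $K(Y_2) = K(\eta_2)$ contains an $\bA^1_{K(Y_2)}$-cylinder, hence $f_2$ admits a vertical $\bA^1_K$-cylinder. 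This chain closes the argument.

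The main obstacle I anticipate is the bookkeeping in the middle step: making precise the compatibility between ``vertical cylinder with respect to $f_1$'' (which lives over $K(Y_1)$) and ``cylinder in the generic fiber of $g \circ f_1$'' (which lives over the smaller field $K(Y_2)$), and checking that the relevant generic fibers are genuinely smooth projective surfaces over the correct fields so that Corollary \ref{cor} applies. This requires only that $g \circ f_1$ and $g$ have smooth surface general fibers (part of the hypothesis), together with generic smoothness in characteristic zero; once that is set up, the two applications of Lemma \ref{vertical} on either side and the single application of Corollary \ref{cor} in the middle are formal. A minor point to watch is that $\theta_{\eta_2}$ is indeed birational (not merely dominant) as a map of $K(Y_2)$-schemes, which follows because $\theta$ is birational and restricts to a birational map on generic fibers over a common base.
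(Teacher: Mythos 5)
Your proposal is correct in substance and follows the same overall strategy as the paper: pass to the generic fibers over $K(Y_2)$, apply Corollary \ref{cor} to the birational map $\theta_{\eta_2}$ they inherit from $\theta$, and return via Lemma \ref{vertical} applied to $f_2$. The difference lies in the middle step. The paper produces the $\bA^1_{K(Y_2)}$-cylinder in $X_{1,\eta_2}$ in one line: if $U\simeq \bA^1_K\times Z$ is a vertical cylinder for $f_1$, with $f_1|_U=h\circ pr_Z$ for some $h:Z\to Y_1$, then $(g\circ f_1)|_U=(g\circ h)\circ pr_Z$, so by Definition \ref{vertical:def} the same open set is a vertical cylinder for $g\circ f_1$, and a single application of Lemma \ref{vertical} to $g\circ f_1$ gives the cylinder in its generic fiber. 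You instead apply Lemma \ref{vertical} twice on the $X_1$ side, and your formulation of the second application is imprecise: $X_{1,\eta_2}$ does not admit a morphism to $\Spec K(Y_1)$; it maps via $f_{1,\eta_2}$ to $Y_{1,\eta_2}=Y_1\times_{Y_2}\eta_2$, the generic fiber of $g$, which is a positive-dimensional $K(Y_2)$-variety whose \emph{generic point} is $\Spec K(Y_1)$. Lemma \ref{vertical} should be invoked for $f_{1,\eta_2}:X_{1,\eta_2}\to Y_{1,\eta_2}$, whose generic fiber is exactly $X_{1,\eta_1}$; with that correction your two-step route closes, though the paper's composition trick is shorter. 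One caveat you share with the paper's own proof: Corollary \ref{cor} requires both $X_{1,\eta_2}$ and $X_{2,\eta_2}$ to be smooth projective surfaces over $K(Y_2)$, while the stated hypothesis only controls general fibers of $g\circ f_1$ (and $g$); smoothness of the generic fiber of $f_2$ is used implicitly, and citing the hypothesis on $g$ does not literally yield it, so this is a defect of the statement rather than of your argument.
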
  
\begin{proof}
By the definition of a vertical cylinder (see Definition \ref{vertical:def}) and the assumption, $g \circ f_1$ admits a vertical $\bA ^1_K$-cylinder. 
Thus, we obtain this assertion by Lemma \ref{vertical} and Corollary \ref{cor}. 
\end{proof}
\subsection{Application to cylinders in singular surfaces}
Let $S$ be a normal projective surface defined over $k$. 
If $S$ is smooth and does not contain any $\bA ^1_k$-cylinder, then so does any smooth projective surface $S'$ over $k$, which is birational to $S$ over $k$, by Corollary \ref{cor}. 
Now, we shall consider the case without assuming that $S$ is smooth. 
Then we have: 
\begin{cor}\label{cor(3)}
With the notation as above, assume further that the minimal resolution $\widetilde{S}$ of $S$ over $k$ contains no $\bA ^1_k$-cylinder. 
Then so does any normal projective surface $S'$ over $k$, which is birational to $S$ over $k$. 
\end{cor}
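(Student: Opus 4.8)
The plan is to deduce Corollary~\ref{cor(3)} from Corollary~\ref{cor} by passing to minimal resolutions, so the argument will be by contradiction. Suppose some normal projective surface $S'$ over $k$, birational to $S$ over $k$, contains an $\bA^1_k$-cylinder $U \simeq \bA^1_k \times Z$; since $\dim S' = 2$, here $Z$ is a curve over $k$.

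First I would arrange that $U$ lies in the smooth locus of $S'$. Replacing $Z$ by its smooth locus — this only shrinks the open set $U$, leaving it an $\bA^1_k$-cylinder — we may assume $Z$ is a smooth curve, so that $U \simeq \bA^1_k \times Z$ is smooth; being an open subscheme of $S'$, it is then contained in $\Sm(S')$. Now let $\rho : \widetilde{S'} \to S'$ be the minimal resolution of $S'$ over $k$, which exists and is defined over $k$ because $k$ is perfect. Since the minimal resolution is an isomorphism over the smooth locus of $S'$, the preimage $\rho^{-1}(U)$ is isomorphic to $U$, and hence $\widetilde{S'}$ — a smooth projective surface over $k$ — contains the $\bA^1_k$-cylinder $\rho^{-1}(U) \simeq \bA^1_k \times Z$.

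Finally I would invoke Corollary~\ref{cor}: the surfaces $\widetilde{S'}$ and $\widetilde{S}$ are both smooth projective over $k$, and they are birational over $k$ (composing the birational maps $\widetilde{S'} \dashrightarrow S'$, $S' \dashrightarrow S$, $S \dashleftarrow \widetilde{S}$), so $\widetilde{S}$ contains an $\bA^1_k$-cylinder as well. This contradicts the hypothesis on $\widetilde{S}$, completing the proof. The only step that is not completely formal is the first one — checking that the cylinder sits inside the smooth locus and therefore lifts isomorphically to the minimal resolution — and I expect that, together with recalling that the minimal resolution is an isomorphism over $\Sm(S')$, to be the (minor) crux of the argument.
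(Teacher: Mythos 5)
Your argument is correct and is essentially the paper's own proof: pass to the minimal resolution $\widetilde{S}'$ of $S'$ and apply Corollary \ref{cor} to the birational smooth projective surfaces $\widetilde{S}'$ and $\widetilde{S}$. The only difference is cosmetic: you run it by contradiction and spell out the step (shrinking $Z$ to its smooth locus so the cylinder lies in $\Sm(S')$ and lifts isomorphically to $\widetilde{S}'$) that the paper leaves implicit in the phrase ``Namely, so does $S'$''.
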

\begin{proof}
We shall take any normal projective surface $S'$ over $k$ admitting a birational map $\chi :S' \dashrightarrow S$ over $k$. 
Let $\widetilde{S}'$ be the minimal resolution of $S'$ over $k$. 
Then there exists a birational map $\widetilde{\chi}: \widetilde{S}' \dashrightarrow \widetilde{S}$ over $k$ such that the following diagram is commutative: 
\begin{align*}
\xymatrix@C=40pt{
\widetilde{S}' \ar[d]_{\sigma '} \ar@{-->}[rr]^-{\widetilde{\chi}} & & \widetilde{S} \ar[d]^{\sigma} \\
S' \ar@{-->}[rr]_-{\chi} & & S
}
\end{align*}
Here, in the above diagram, $\sigma :\widetilde{S} \to S$ and $\sigma ' :\widetilde{S}' \to S'$ are the minimal resolutions $S$ and $S'$ over $k$, respectively. 
By the assumption and Corollary \ref{cor}, we see that $\widetilde{S}'$ contains no $\bA ^1_k$-cylinder. 
Namely, so does $S'$. This completes the proof. 
\end{proof}
Recall that the existing condition of $\bA ^1$-cylinders in normal del Pezzo surfaces of Picard rank one with only Du Val singularities is completely determined ({\cite{Saw2}}). 
By using Corollary \ref{cor(3)}, we can construct many examples of singular del Pezzo surfaces containing no $\bA ^1$-cylinder over algebraically non-closed fields (Picard rank of these surfaces need not necessarily be one). 
\begin{eg}
Let $S \subseteq \bP ^4_k$ be a singular intersection of two quadrics defined over $k$ such that $S_{\kc}$ has exactly two Du Val singular points of type $A_1$ lying in the same $\Gal$-orbit and there is no line on $S_{\kc}$ joining these singularities.
Note that $S$ is an Iskovskih surface (see {\cite[p.\ 74]{CT88}}, for this definition). 
Let $\sigma :\widetilde{S} \to S$ be the minimal resolution over $k$. 
Then we know that $\widetilde{S}$ is $k$-minimal and satisfies $(-K_{\widetilde{S}})^2=4$ (see also the weighted dual graph in {\cite[Type 3 of Proposition 6.1]{CT88}}). 
Hence, $\widetilde{S}$ contains never $\bA ^1_k$-cylinder by Theorem \ref{main(1)}. 
Thus, by Corollary \ref{cor(3)} every normal projective surface over $k$, which is birational to $S$ over $k$, does not contain any $\bA ^1_k$-cylinder. 
\end{eg}
Note that $S$ does not always contain an $\bA ^1_k$-cylinder even if its minimal resolution $\widetilde{S}$ contains an $\bA ^1_k$-cylinder. 
In fact, we can construct the following example: 
\begin{eg}
Let $S$ be a normal del Pezzo surface defined over $k$ and let $\sigma :\widetilde{S} \to S$ be the minimal resolution over $k$. 
Assume that $S_{\kc}$ has exactly two Du Val singular points of type $D_4$, and $\rho _k(\widetilde{S})=9$ (if $k=\kc$, this assumption naturally holds). 
Since $\widetilde{S}$ is then rational over $k$, we know that $\widetilde{S}$ contains an $\bA ^1_k$-cylinder by Theorem \ref{main(2)}. 
Meanwhile, since $S$ is a normal del Pezzo surface of the Picard rank one and of degree $1$ with only two Du Val singularities of type $D_4$, $S$ does not contain any $\bA ^1_k$-cylinder by {\cite{Saw2}}. 
Note that this fact also follow from {\cite{CPW16b}} because of $\rho _{\kc}(S_{\kc})=1$. 
\end{eg}

\end{document}